\documentclass[USenglish]{article}
\usepackage[utf8]{inputenc}
\usepackage[big,online]{dgruyter}
\usepackage{lmodern}
\usepackage{fix-cm}
\usepackage{mathrsfs,amsmath}
\usepackage{microtype}
\usepackage[numbers,square,sort&compress]{natbib}
\usepackage{hyperref}
\hypersetup{colorlinks=false}
\usepackage{xcolor}
\usepackage{mathtools}
\usepackage{anyfontsize}
\usepackage{mathrsfs}

\theoremstyle{dgthm}
\newtheorem{theorem}{Theorem}[section]
\newtheorem{lemma}{Lemma}[section]
\newtheorem{remark}{Remark}[section]

\newtheorem{corollary}{Corollary}[section]
\DeclarePairedDelimiter{\norm}{\lVert}{\rVert}
\newcommand{\lr}[1]{\left(#1\right)}
\AtBeginDocument{%
}
\begin{document}

\title{Double weighted sum involving $\mathrm{GL}(2)$ Fourier coefficients}
\author[1]{Himanshi Chanana}
\author*[2]{Mohd Harun}
\runningauthor{Himanshi Chanana and Mohd Harun}
\affil[1]{\protect\raggedright Department of Mathematics and Statistics, Indian Institute of Technology Kanpur, 
Kalyanpur, Kanpur Nagar-208016, India, email: hchanana20@gmail.com, ORCID-0000 0001 5643 8712}
\affil[2]{\protect \raggedright Department of Science and Mathematics, Indian Institute of Information Technology Guwahati-781015,
India, email: harunmalikjmi@gmail.com, ORCID-0000 0003 0052 4122}

\abstract{
 This article proves non-trivial estimates for a bilinear sum involving the Fourier coefficients of a Hecke-holomorphic or Hecke-Maass cusp form for $\mathrm{SL}(2,\mathbb{Z})$. As corollaries, we draw interesting results related to non-trivial bounds of different shifted convolution sums and summatory functions.} 
 \keywords{Bilinear sum, Shifted convolution sum, Hecke eigenforms, Maass forms, Voronoi summation.}
 \classification{11F03, 11F11, 11F30, 11N37}
\maketitle

\section{Introduction}
Let $\alpha(n), \beta(n)$ and $\gamma(n)$ be the coefficients of certain $L$-functions or more general coefficients of arithmetic interest. Let $X$ and $Y$ be real numbers such that $X, Y \geqslant 1$ and $Y \leqslant X$. A fundamental problem in number theory is to understand the correlation sums of the form:

\begin{equation} \label{SCSs}
  \mathsf{S_1}:=  \sum_{1 \leqslant n \leqslant X} \alpha(n) \gamma(n+m) \ \text{ and } \ \mathsf{S_2}:= \mathop{\sum_{1 \leqslant n \leqslant X}\sum_{ 1 \leqslant m \leqslant Y}} \alpha(n)\beta(m) \gamma(n+m).
\end{equation}
\noindent The study of these types of sums play a pivotal role in the understanding of some of the prominent problems in analytic number theory, such as the moments of $L$-functions, subconvexity, the Gauss circle problem, and the Quantum Unique Ergodicity (QUE) conjecture (see, e.g., \cite{Blomer1}, \cite{DFI}, \cite{HM}, \cite{HB}, \cite{HS}, \cite{LLY}, and the references therein). Shifted convolution sums, i.e., sums of the form $\mathsf{S}_1$, are directly related to many unsolved problems in number theory, such as the twin prime conjecture (when $\alpha = \gamma = \Lambda$, the von Mangoldt function) and the Chowla conjecture for two-point correlations (when $\alpha = \gamma = \mu$, the M\"{o}bius function). Additionally, many authors in the literature focus on the sum $\mathsf{S_1}$ with \(\alpha = \tau_{\ell}\) and \(\gamma = \tau_k\), where \(\tau_{\ell}\) and \(\tau_k\) are the \(\ell\)-th and \(k\)-th divisor functions, respectively, for \(\ell\) and $ k \in \mathbb{N}$. This is related to the classical additive divisor problem, which seeks an asymptotic formula for \(\mathsf{S_1}\) when \(\ell\) and \( k \geqslant 2\). It is conjectured \cite{Ng SCS conjecture} that
\vspace{0.1cm}
\begin{equation*}
    \sum_{n \leqslant X} \tau_{\ell}(n) \tau_k(n+m) \sim C_{\ell,k}(m) X(\log X)^{\ell+k-2}, \ \ \ \ \ \ \ \ \ \text{as} \ X \rightarrow \infty.
\end{equation*}
\vspace{0.1cm}

\noindent The analogy is deeper because $\tau_k(n)$ appears as the $n$-th Fourier coefficient of certain Eisenstein series for $\mathrm{SL}(k,\mathbb{Z})$. For the case of $\ell=k=2$, the strongest results are due to Meurman \cite{meurman}. 
\vspace{0.1cm}

\noindent One of the challenging problems is understanding the average behavior of any arithmetic function over sparse sequences.  If $\alpha, \beta$ and $\gamma$ are the same multiplicative functions then the sums $\mathsf{S_1}$ and $\mathsf{S_2}$ can also be interpreted as studying the average behavior of an arithmetic function over a sparse sequence of the form $p(x)=x(x + m)$ and $p(x,y)=xy(x+y)$, respectively. In this article, we study cancellations between such sums, focusing on the Fourier coefficients of $\mathrm{GL}(2)$ forms by using a variant of the delta method due to Duke et al \cite[Eq. 20.157]{R1}. In this direction, some of the classical results are due to Ingham \cite{ingham}, Scourfield \cite{scour}, and Hooley \cite{hooley}. They established an asymptotic expansion for the average behavior of $\tau(n)$ over sequences of the form $p(x) = x^2+a$ for $a \in \mathbb{Z}$. Erd$\ddot{o}$s \cite{erdos} established the following result for any irreducible polynomial $p(x) \in \mathbb{Z}[x]$:

\begin{equation*}
    X \log{X} \ll \sum_{1 \leqslant n \leqslant X} \tau(p(n)) \ll X \log{X}.
\end{equation*}


  \vspace{0.2cm}   

\noindent To prove a non-trivial bound for such sums, we study general bilinear sums involving the Fourier coefficients of $\mathrm{GL}(2)$ forms and two arbitrary complex sequences. Friedlander and Iwaniec  \cite{FI} studied the $\mathrm{GL}(1)$ analogue of a similar problem in $1993$. For a prime $p$, and $\chi$, a non-principal character modulo $p$, they established a non-trivial bound for the following sum:

\begin{equation} \label{gl1 result}
    \sum_{n \in \mathcal{N}} \sum_{m \in \mathcal{M}} \mathsf{a}(n) \mathsf{b}(m) \chi(n+m),
\end{equation}
\noindent where $\mathsf{a}(n)$ and $ \mathsf{b}(n)$ are bounded arithmetic functions, and $\mathcal{N}$, $\mathcal{M}$ are finite sets of integers. They proved several estimates for this sum, which, in particular, provide a modified proof of the Pólya-Vinogradov and Burgess inequalities. A decade earlier, while studying the $\mathrm{GL}(2)$ analogue of the famous Titchmarsh's divisor problem, Pitt \cite{pitt} derived a non-trivial bound for a similar bilinear sum. He considered the following sum
\begin{equation*}
  \mathfrak{B}_{X,Y} =  \sum_{1 \leqslant n \leqslant X}\sum_{1 \leqslant m \leqslant Y} \mathsf{a}(n)\mathsf{b}(m)\mathcal{A}_f(nm-1),
\end{equation*}
where $\mathcal{A}_f(n)$ is the normalized $n$-th Fourier coefficient of $f \in S_k(\mathrm{SL}(2,\mathbb{Z}))$, the space of holomorphic cusp forms, at the cusp $\infty$ (see \autoref{GL2 forms} for details). He proved

\begin{equation*}
   \mathfrak{B}_{X,Y} \ll_{f,\varepsilon} \norm{\mathsf{a}}_2 \norm{\mathsf{b}}_2 \big\{ X^{1/2} Y^{3/8}  + X^{3/8} Y^{3/4} \big\} (XY)^{(k-1)/2 + \varepsilon}. 
\end{equation*}
Here, $\norm{.}_2$ denotes the usual $\ell^2$-norm.
\medskip

\noindent In the first result of this article, we consider an analogous sum as Friedlander and Iwaniec for general $\mathrm{GL}(2)$ Fourier coefficients, which extends the result in \autoref{gl1 result} to $\mathrm{GL}(2)$ Fourier coefficients. Let $f$ be a Hecke-holomorphic or Hecke-Maass cusp form for the full modular group $\mathrm{SL}(2,\mathbb{Z})$ with normalized Fourier coefficients $\mathcal{A}_f(n)$. We define:

\begin{equation} \label{define S(X,Y)}
    \mathscr{S}_{X,Y} := \sum_{1 \leqslant n \leqslant X}\sum_{1 \leqslant m \leqslant Y} \mathsf{a}(n)\mathsf{b}(m)\mathcal{A}_f(n+m),
\end{equation}
where $\mathsf{a}(n)$ and $\mathsf{b}(n)$ are any complex sequences. It follows from the Rankin-Selberg theory that the Fourier coefficients $\mathcal{A}_f (n)  $\textquotesingle s are bounded on average (see \autoref{bound for gl2 coeff}). 
Now, applying the Cauchy-Schwarz inequality first in the $n$-sum and then in the $m$-sum, and using \autoref{bound for gl2 coeff}, we obtain that, for any $\varepsilon>0$, the sum $\mathscr{S}_{X,Y}$ in \autoref{define S(X,Y)} is trivially bounded by $\norm{\mathsf{a}}_2 \norm{\mathsf{b}}_2  (XY)^{1/2+\varepsilon}$. As our primary result, we prove the following theorem.

\begin{theorem}\label{th1}
Let $\mathsf{a}(n)$ and $ \mathsf{b}(n)$ be any two sequences of complex numbers, and $\mathcal{A}_f(n)$ be the normalized $n$-th Fourier coefficient of a Hecke-holomorphic or Hecke-Maass cusp form $f$ for the full modular group $\mathrm{SL}(2,\mathbb{Z})$. Let $Y < X^{1-\varepsilon}$ for any arbitrarily small $\varepsilon >0$, we have
\begin{align*}
\mathscr{S}_{X,Y} \ll_{f, \varepsilon } \,\norm{\mathsf{a}}_2 \, \norm{\mathsf{b}}_2\,\frac{ X^{3/8}}{ Y^{3/4}}\, (X Y)^{1/2 + \varepsilon },   
\end{align*}
where $\lVert . \rVert_2$ denotes the Euclidean norm. This bound for $ \mathscr{S}_{X,Y}$ is non-trivial within the range $ X^{1/2} < Y <  X^{1-\varepsilon}$. 
\end{theorem}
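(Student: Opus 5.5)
We separate the variables $n$ and $m$ in $\mathcal{A}_f(n+m)$ with the delta method of Duke--Friedlander--Iwaniec, as signalled in the introduction. Set $r=n+m$, a new variable of size about $X$, and insert a smooth weight $V(r/X)$. We detect the equation $r=n+m$ with $\delta(r-n-m)$, expanded at modulus size $Q=X^{1/2}$:
\begin{equation*}
\delta(h)=\frac{1}{Q}\sum_{1\leqslant q\leqslant Q}\frac{1}{q}\,\sideset{}{^\star}\sum_{a \bmod q} e\Big(\frac{ah}{q}\Big)\int_{\mathbb{R}} g(q,x)\,e\Big(\frac{hx}{qQ}\Big)\,dx .
\end{equation*}
The $r$-sum then becomes $\sum_r \mathcal{A}_f(r)e(ar/q)e(rx/(qQ))V(r/X)$.

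We apply $\mathrm{GL}(2)$ Voronoi summation to this $r$-sum. The dual sum has $\bar a$ in the additive character, and its length is $\widetilde r\ll q^2(1+|x|X/(qQ))^2/X\ll X^{\varepsilon}Q^2/X$. With $Q=X^{1/2}$ this is essentially $\widetilde r\ll X^{\varepsilon}$. The $r$-sum therefore contributes $\frac{X}{q}\sum_{\widetilde r\ll X^\varepsilon}\mathcal{A}_f(\widetilde r)e(-\bar a\widetilde r/q)\times(\text{Bessel transform})$, and the Bessel transform is controlled by stationary phase or trivially.

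What remains is a bilinear sum in the untouched variables,
\begin{equation*}
\sum_{n}\mathsf{a}(n)e\Big(-\frac{an}{q}-\frac{nx}{qQ}\Big)\sum_m \mathsf{b}(m)e\Big(-\frac{am}{q}-\frac{mx}{qQ}\Big),
\end{equation*}
summed over $q\leqslant Q$ and $a \bmod q$, and coupled to the $\widetilde r$-sum only through $e(-\bar a\widetilde r/q)$. We apply Cauchy--Schwarz with $n$ outside. The diagonal and off-diagonal terms are then handled by the large sieve inequality for Farey fractions, $\sum_{q\leqslant Q}\sum^\star_{a}|\sum_{n\leqslant X}c_n e(an/q)|^2\ll (Q^2+X)\|c\|_2^2$, which gives the factor $\|\mathsf{a}\|_2(X+Q^2)^{1/2}$.

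The $m$-sum is the hard part, and this is where the gain in $Y$ must come from. The $m$-sum has length only $Y<X$, while the moduli run up to $Q=X^{1/2}$. We open the square $|\sum_m\mathsf{b}(m)\cdots|^2$ together with the $\widetilde r$-sum and the $a$-sum. The $a$-sum becomes a Kloosterman-type sum $S(m-m',\widetilde r-\widetilde r';q)$, which Weil's bound estimates as $q^{1/2+\varepsilon}$. The $x$-integral, after integrating by parts, forces $|m-m'|\ll qQ/X\cdot X^\varepsilon$.

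I would balance the diagonal $m=m'$ against the Weil-bounded off-diagonal, possibly after a further Cauchy--Schwarz or Hölder step. If the plan works, this yields the exponent pattern $X^{3/8}Y^{-3/4}$, with the $3/8$ arising as the fourth root coming from Weil's bound at $q\sim X^{1/2}$.

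The main obstacle is this last balancing step. A naive count of the off-diagonal only recovers the trivial bound, so I would first try choosing $Q$ as a free parameter, then optimize it against $Y$. I would also use the condition $Y<X^{1-\varepsilon}$, so that the dual length and the integration-by-parts cutoffs are genuinely effective. I expect this to produce $\|\mathsf{a}\|_2\|\mathsf{b}\|_2X^{7/8}Y^{-1/4+\varepsilon}$, which is the claimed bound.
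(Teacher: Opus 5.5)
Your proposal does not reach the bound. The decisive estimate is deferred (``if the plan works'', ``I expect''), and the steps you do specify would not yield the stated bound. Write $\theta=a/q+x/(qQ)$, $\mathcal{N}(\theta)=\sum_n\mathsf{a}(n)e(-n\theta)$, $\mathcal{M}(\theta)=\sum_m\mathsf{b}(m)e(-m\theta)$, and let $R$ be the $r$-sum. There are three problems.

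\emph{The $x$-integral localization.} The $x$-integral does not force $|m-m'|\ll qQX^{\varepsilon}/X$. The phase is $e(-(m-m')x/(qQ))$ with integers $m,m'$, so integration by parts gives at best $|m-m'|\ll qQX^{\varepsilon}$. That is no restriction once $q\gg Y/Q$. The scale $qQ/X$ is correct only for normalized variables $m/X$, as in the paper's condition on $|x_1-x_2-x_3+x_4|$.

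\emph{Weil's bound.} It does not produce the claimed shape. At $q\sim Q$ the off-diagonal $m\neq m'$ is $\ll\sum_{q\sim Q}q^{1/2+\varepsilon}\bigl(\sum_m|\mathsf{b}(m)|\bigr)^2\ll Q^{3/2+\varepsilon}Y\norm{\mathsf{b}}_2^2$, against a diagonal $\ll Q^{2+\varepsilon}\norm{\mathsf{b}}_2^2$. With your prefactor $Q^{-2}\cdot X/Q$ and large-sieve factor $X^{1/2}\norm{\mathsf{a}}_2$, this gives $\norm{\mathsf{a}}_2\norm{\mathsf{b}}_2(X^{1/2}+X^{3/8}Y^{1/2})X^{\varepsilon}$. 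The second term exceeds $X^{7/8}Y^{-1/4}$ as soon as $Y>X^{2/3}$.

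\emph{Small moduli.} The pointwise large sieve on the $n$-side is too lossy there. For $q\sim C$ the $x$-integral only samples $\theta$ on arcs of length $X^{\varepsilon}/(CQ)$ around $a/q$. Parseval therefore gives $\sum_{q\sim C}\sum_{a}\int|\mathcal{N}|^2\,dx\ll CQX^{\varepsilon}\norm{\mathsf{a}}_2^2$, not merely $X\norm{\mathsf{a}}_2^2$. Even if the $m$-side is bounded as efficiently as possible (Wilton for $R$, Parseval on arcs for $\mathcal{M}$), your arrangement leaves the block $q\asymp1$ at $X^{3/4}\norm{\mathsf{a}}_2\norm{\mathsf{b}}_2$. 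That exceeds the target for every $Y>X^{1/2}$, i.e.\ on the whole range where the theorem has content.

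What is missing is the observation that no saving from the $m$-sum is needed. Your Voronoi step in $r$ (dual length $X^{\varepsilon}$, then stationary phase) shows $R\ll X^{1/2+\varepsilon}$ uniformly in $q\leqslant Q$, $a$ and $|x|\ll X^{\varepsilon}$. Bound $R$ trivially and treat both $\mathcal{N}$ and $\mathcal{M}$ on the Farey arcs, which for $q\sim C$ overlap at most $X^{\varepsilon}$ times. Each dyadic block then contributes $\ll(QC)^{-1}X^{1/2+\varepsilon}\cdot CQX^{\varepsilon}\int_0^1|\mathcal{N}\mathcal{M}|\,d\theta\ll X^{1/2+2\varepsilon}\norm{\mathsf{a}}_2\norm{\mathsf{b}}_2$.

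Stripped of the smoothing, this is a one-line argument. Write $\mathscr{S}_{X,Y}=\int_0^1F(\theta)A(\theta)B(\theta)\,d\theta$ with $F(\theta)=\sum_{r\leqslant X+Y}\mathcal{A}_f(r)e(-r\theta)$, $A(\theta)=\sum_{n\leqslant X}\mathsf{a}(n)e(n\theta)$ and $B(\theta)=\sum_{m\leqslant Y}\mathsf{b}(m)e(m\theta)$. The uniform Wilton-type bound $\sup_\theta|F(\theta)|\ll_{f,\varepsilon}X^{1/2+\varepsilon}$ is classical for holomorphic forms and also known for Maass cusp forms of level one. Together with Cauchy--Schwarz and Parseval it gives $\mathscr{S}_{X,Y}\ll\norm{\mathsf{a}}_2\norm{\mathsf{b}}_2X^{1/2+\varepsilon}$. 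This implies the theorem for all $Y\leqslant X$, because $X^{1/2}<X^{7/8}Y^{-1/4}$.

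The paper argues quite differently. It removes both $\mathsf{a}$ and $\mathsf{b}$ by two applications of Cauchy--Schwarz, reducing to $\sum_{|m|\ll Y}\bigl|\sum_n\mathcal{A}_f(n)\overline{\mathcal{A}_f(n+m)}\bigr|^2$. It then detects $n_4=n_2+n_3-n_1$ with the delta method, applies Voronoi in all four variables, and bounds this quantity by $X^{7/2+\varepsilon}/Y^{2}$. The final shape comes from $\mathscr{S}_{X,Y}\ll\norm{\mathsf{a}}_2\norm{\mathsf{b}}_2\,Y^{1/4}(X^{7/2+\varepsilon}/Y^2)^{1/4}$, not from Weil's bound. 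So the saving in $Y$ you were trying to extract from the $m$-sum is an artifact of that method, not something the statement requires.
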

\begin{corollary}
    Let $ X$ and $ Y$ be defined as in Theorem \ref{th1}. Set $\mathsf{a}(n)= \Lambda(n),$ $\mu(n)$ or $\tau_k(n)$ and $\mathsf{b}(n) =1$ for all $n \in \mathbb{N}$. Then we have
    \begin{equation*}
        \sum_{1 \leqslant n \leqslant X}\,\sum_{1 \leqslant m \leqslant Y} \, \mathsf{a}(n) \mathcal{A}_f(n+m) \ll_{f, \varepsilon } \frac{ X^{3/8}}{ Y^{3/4}}\, (X Y)^{1 + \varepsilon }.
    \end{equation*}
This bound is non-trivial in the range $ X^{1/2} < Y <  X^{1-\varepsilon}$.    
\end{corollary}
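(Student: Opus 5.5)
The corollary is a direct specialisation of Theorem \ref{th1}. We insert the particular sequences and evaluate their $\ell^2$-norms; no new analytic input is needed.

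\textbf{Step 1: the norm of $\mathsf{b}$.} With $\mathsf{b}(m)=1$ for $1\leqslant m\leqslant Y$ (the sum in \autoref{define S(X,Y)} only sees $m\leqslant Y$), we have $\norm{\mathsf{b}}_2 = \lfloor Y\rfloor^{1/2}\leqslant Y^{1/2}$.

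\textbf{Step 2: the norm of $\mathsf{a}$.} We need $\norm{\mathsf{a}}_2 \ll_{\varepsilon} X^{1/2+\varepsilon}$ in each case, with $\mathsf{a}$ restricted to $n\leqslant X$.
\begin{itemize}
\item For $\mathsf{a}=\mu$, this is immediate from $|\mu(n)|\leqslant 1$.
\item For $\mathsf{a}=\Lambda$, we use $\sum_{n\leqslant X}\Lambda(n)^2\leqslant \log X\sum_{n\leqslant X}\Lambda(n)\ll X\log X$ by Chebyshev's bound.
\item For $\mathsf{a}=\tau_k$, the divisor bound $\tau_k(n)\ll_{k,\varepsilon} n^{\varepsilon}$ gives $\sum_{n\leqslant X}\tau_k(n)^2\ll X^{1+2\varepsilon}$. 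Alternatively one can use the classical $\sum_{n\leqslant X}\tau_k(n)^2\ll X(\log X)^{k^2-1}$.
\end{itemize}
In every case the logarithmic factors are absorbed into $X^{\varepsilon}$ after renaming $\varepsilon$.

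\textbf{Step 3: apply Theorem \ref{th1}.} Since $X^{1/2}Y^{1/2}=(XY)^{1/2}$, the theorem gives
\begin{equation*}
\sum_{n\leqslant X}\sum_{m\leqslant Y}\mathsf{a}(n)\mathcal{A}_f(n+m)\ll_{f,\varepsilon} X^{1/2+\varepsilon}Y^{1/2}\,\frac{X^{3/8}}{Y^{3/4}}(XY)^{1/2+\varepsilon}\ll \frac{X^{3/8}}{Y^{3/4}}(XY)^{1+2\varepsilon},
\end{equation*}
which is the claimed bound after renaming $\varepsilon$. The hypothesis $Y<X^{1-\varepsilon}$ is inherited from the theorem.

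\textbf{Step 4: non-triviality.} The trivial bound is $(XY)^{1+\varepsilon}$. To see this, apply Cauchy–Schwarz together with the Rankin–Selberg mean-square bound \autoref{bound for gl2 coeff} over the range $n+m\leqslant 2X$, as in the discussion before Theorem \ref{th1}, and then use Step 2. Our bound beats this exactly when $X^{3/8}Y^{-3/4}<1$, i.e.\ when $Y>X^{1/2}$. Combined with $Y<X^{1-\varepsilon}$, this gives the stated range.

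There is no genuine obstacle. The only point needing care is that the norm bounds in Step 2 cost at most $X^{\varepsilon}$, so that they do not erode the saving $X^{3/8}Y^{-3/4}$.
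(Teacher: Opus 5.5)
Your proposal is correct and matches the paper's implicit argument. The paper gives no separate proof: it treats the corollary as a direct specialization of Theorem \ref{th1}, inserting $\norm{\mathsf{b}}_2\leqslant Y^{1/2}$ and $\norm{\mathsf{a}}_2\ll X^{1/2+\varepsilon}$ exactly as you do, and your remark that $\norm{\mathsf{a}}_2$ must be read as the norm over $n\leqslant X$ (the full $\ell^2$-norms of $\Lambda$, $\mu$, $\tau_k$ being infinite) is a worthwhile clarification the paper leaves unstated.
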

\begin{remark}
   The above corollary provides a non-trivial bound for the $\mathrm{GL}(2)$ analog of Titchmarsh's divisor problem on average over the shift. Furthermore, if we take \(\mathsf{a}(n)\) to be \(\mathrm{GL}(3)\) Fourier coefficients, then the first power saving result for \(\mathrm{GL}(3) \times \mathrm{GL}(2)\) shifted convolution sum was obtained by Munshi \cite{RM gl3xgl2}. Subsequently, Sun \cite{Sun avg over shift} established a non-trivial bound for the smooth sum by considering additional averaging over the shift. The second author \cite{HSS} of the present article also established a non-trivial bound of the same strength as of Sun \cite{Sun avg over shift} for the weighted average version of $\mathrm{GL}(3) \times \mathrm{GL}(2)$ shifted convolution sum.
\end{remark}


 The next corollary establishes an on-average non-trivial bound for the shifted convolution sum, earlier studied by L\"{u} et al \cite{Guangshi et al} for holomorphic cusp forms $f$ of weight $k$ and level $N$. Let $\ell \in \mathbb{N}$, and $r_\ell(n) = |\{(n_1,n_2,...,n_\ell) \in \mathbb{Z}^\ell : n_1^2+n_2^2+...n_\ell^2 =n\}|$ denote the $n$-th Fourier coefficient of the modular form which is $\ell$-th power of the classical Jacobi theta series, $\theta(z)= \sum_{n \in \mathbb{Z}} e(n^2z)$. It is well-known that $r_\ell(n) \asymp n^{\ell/2-1}$ holds for $\ell>4$.  They proved
\begin{equation} \label{guangshi bound rln}
    \sum_{n \leqslant X} \mathcal{A}_f(n+m) r_{\ell}(n) \ll_{f,\ell,\varepsilon} X^{\ell/2 - \vartheta_{\ell} +\varepsilon},
\end{equation}
where for $N=1$, $\vartheta_2 = \frac{1}{6}, \vartheta_3 =\frac{1}{4}, \vartheta_4 = \vartheta_5 = \frac{1}{2}$ and $\vartheta_\ell = \frac{2}{3}$, for $\ell \geqslant 6$. Later, Sun in \cite{Sun theta} generalizes the result in \autoref{guangshi bound rln} to more general $\mathrm{GL}(2)$ cusp form that is a holomorphic form of weight $k$, level $N$, and nebentypus $\chi_N$, or Maass forms of weight $0$ or $1$, level $N$, nebentypus $\chi_N$ with  Laplace eigenvalue $1/4+\nu^2$.

\begin{corollary} \label{coro rln}
    Let $ X$ and $ Y$ be defined as in Theorem \ref{th1}. Set $\mathsf{a}(n)= r_{\ell}(n)$ and $\mathsf{b}(n) = 1$ with $\ell > 4$. We have
    \begin{equation*}
        \sum_{1 \leqslant n \leqslant X} \sum_{1 \leqslant m \leqslant Y} r_{\ell}(n) \mathcal{A}_f(n+m) \ll_{f,\ell, \varepsilon} \frac{ X^{3/8}}{ Y^{3/4}}\, \left(X^{\ell/2} Y\right)^{1 + \varepsilon }.
    \end{equation*}
    This bound is non-trivial in the range $X^{1/2}< Y< X^{1-\varepsilon}$.
\end{corollary}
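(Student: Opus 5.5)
The statement to prove is Corollary \ref{coro rln}, and it follows directly from Theorem \ref{th1} with $\mathsf{a}(n)=r_\ell(n)$ for $1\leqslant n\leqslant X$ and $\mathsf{b}(m)=1$ for $1\leqslant m\leqslant Y$. With these choices the double sum in the corollary is exactly $\mathscr{S}_{X,Y}$ from \autoref{define S(X,Y)}. The proof therefore only needs the two $\ell^2$-norms and then an exponent check.

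\textbf{Step 1: the norms.} Trivially $\norm{\mathsf{b}}_2 = Y^{1/2}$. For $\ell>4$ we use the quoted bound $r_\ell(n)\asymp n^{\ell/2-1}$; in fact only the upper bound $r_\ell(n)\ll_\ell n^{\ell/2-1}$ is needed. This gives
\begin{equation*}
\norm{\mathsf{a}}_2^2=\sum_{n\leqslant X} r_\ell(n)^2 \ll_\ell \sum_{n\leqslant X} n^{\ell-2}\ll X^{\ell-1},
\end{equation*}
so $\norm{\mathsf{a}}_2\ll X^{(\ell-1)/2}$. Alternatively one can avoid the pointwise bound: $\sum_{n\le X} r_\ell(n)=\#\{x\in\mathbb{Z}^\ell: |x|^2\le X\}\asymp X^{\ell/2}$, and combining this with $\max_{n\le X} r_\ell(n)\ll X^{\ell/2-1}$ gives the same estimate.

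\textbf{Step 2: insert into Theorem \ref{th1}.} Theorem \ref{th1} gives
\begin{equation*}
\mathscr{S}_{X,Y}\ll X^{(\ell-1)/2}\,Y^{1/2}\cdot\frac{X^{3/8}}{Y^{3/4}}\,(XY)^{1/2+\varepsilon} = \frac{X^{3/8}}{Y^{3/4}}\,X^{\ell/2}\,Y\,(XY)^{\varepsilon}.
\end{equation*}
Since $(XY)^{\varepsilon}\le (X^{\ell/2}Y)^{\varepsilon}$ for $\ell\geqslant 2$, this is at most $\frac{X^{3/8}}{Y^{3/4}}(X^{\ell/2}Y)^{1+\varepsilon}$, which is the claimed bound. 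The implied constant depends only on $f$, $\ell$ and $\varepsilon$.

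\textbf{Step 3: non-triviality.} Because $r_\ell(n)\asymp n^{\ell/2-1}$ and $\mathcal{A}_f$ is bounded on average by \autoref{bound for gl2 coeff}, the trivial bound for the sum is $X^{\ell/2}Y$ up to $X^\varepsilon$. Our bound beats this exactly when $X^{3/8}<Y^{3/4}$, that is, when $Y>X^{1/2}$. Together with the hypothesis $Y<X^{1-\varepsilon}$ inherited from Theorem \ref{th1}, this gives the stated range $X^{1/2}<Y<X^{1-\varepsilon}$.

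No step is a real obstacle: all the analytic difficulty is contained in Theorem \ref{th1}. The only point needing care is the second-moment estimate for $r_\ell$, which uses the assumption $\ell>4$ through the pointwise bound $r_\ell(n)\ll n^{\ell/2-1}$.
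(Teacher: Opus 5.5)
Your proposal is correct and matches the paper's intended argument. The paper gives no separate proof; it treats this corollary as a direct specialization of Theorem \ref{th1}. Your computation $\norm{\mathsf{a}}_2 \ll X^{(\ell-1)/2}$, $\norm{\mathsf{b}}_2 = Y^{1/2}$ reproduces both the stated bound and the non-trivial range $X^{1/2}<Y<X^{1-\varepsilon}$. One small point: your ``alternative'' in Step 1 still relies on $\max_{n\le X} r_\ell(n)\ll X^{\ell/2-1}$, which is the pointwise upper bound again, so it does not actually avoid it; this is harmless.
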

Furthermore, Zhang in \cite{zhang} has established an asymptotic expansion over the diagonal quadratic form in the case of the divisor function. More generally, we obtain the following non-trivial bound. 

\begin{corollary}
    Let $ X$ and $ Y$ be defined as in Theorem \ref{th1}. Set $\mathsf{a}(n)= r_{\ell_1}(n)$ and $\mathsf{b}(n) = r_{\ell_2}(n)$ with $\ell_1, \ell_2 > 4$. We have
    \begin{equation*}
         \sum_{1 \leqslant n \leqslant X} \sum_{1 \leqslant m \leqslant Y} r_{\ell_1}(n) r_{\ell_2}(m) \mathcal{A}_f(n+m) \ll_{f,\ell_1,\ell_2,\varepsilon} \frac{ X^{3/8}}{ Y^{3/4}}\, \left(X^{\ell_1/2} Y^{\ell_2/2}\right)^{1 + \varepsilon }.
    \end{equation*}
    This bound is non-trivial in the range $X^{1/2}< Y< X^{1-\varepsilon}$.
\end{corollary}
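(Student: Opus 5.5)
This is a direct specialization of Theorem \ref{th1}. The plan is to take $\mathsf{a}(n)=r_{\ell_1}(n)$ for $1\leqslant n\leqslant X$ and $\mathsf{b}(m)=r_{\ell_2}(m)$ for $1\leqslant m\leqslant Y$, setting both sequences to zero outside these ranges. We then bound the two $\ell^2$-norms appearing in Theorem \ref{th1} and check that the powers combine into the stated bound. No new analytic input is needed beyond Theorem \ref{th1} and the size of $r_\ell(n)$.

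For the norms, we only need the upper bound $r_\ell(n)\ll_\ell n^{\ell/2-1}$, valid for $\ell>4$ as recalled before Corollary \ref{coro rln}. Since $\ell_1>4$, the exponent $\ell_1-2$ is positive, so
\begin{equation*}
\norm{\mathsf{a}}_2^2=\sum_{n\leqslant X} r_{\ell_1}(n)^2 \ll_{\ell_1} \sum_{n\leqslant X} n^{\ell_1-2}\ll X^{\ell_1-1}.
\end{equation*}
This gives $\norm{\mathsf{a}}_2\ll X^{(\ell_1-1)/2}$, and in the same way $\norm{\mathsf{b}}_2\ll Y^{(\ell_2-1)/2}$.

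Substituting these into Theorem \ref{th1}, which applies because $Y<X^{1-\varepsilon}$, we get
\begin{equation*}
X^{(\ell_1-1)/2}\,Y^{(\ell_2-1)/2}\,\frac{X^{3/8}}{Y^{3/4}}\,(XY)^{1/2+\varepsilon}
=\frac{X^{3/8}}{Y^{3/4}}\,X^{\ell_1/2}\,Y^{\ell_2/2}\,(XY)^{\varepsilon}.
\end{equation*}
Since $\ell_1,\ell_2>4$, we have $(XY)^{\varepsilon}\leqslant (X^{\ell_1/2}Y^{\ell_2/2})^{\varepsilon}$, so this is at most the claimed bound.

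For the non-triviality claim, we compare with the trivial bound. By Cauchy--Schwarz and the Rankin--Selberg mean-square bound (\autoref{bound for gl2 coeff}), as in the discussion after \autoref{define S(X,Y)}, the trivial bound is $\norm{\mathsf{a}}_2\norm{\mathsf{b}}_2(XY)^{1/2+\varepsilon}$. This is of size $X^{\ell_1/2}Y^{\ell_2/2}$ up to $(XY)^{\varepsilon}$. The new bound saves the factor $X^{3/8}Y^{-3/4}$, which is a power saving exactly when $Y>X^{1/2}$. Combined with the hypothesis $Y<X^{1-\varepsilon}$, this gives the stated range.

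The only point needing care is the norm computation. The upper bound $r_\ell(n)\ll n^{\ell/2-1}$ with a positive exponent is what makes the sum of squares behave like $X^{\ell_1-1}$ rather than carrying extra logarithmic or $\varepsilon$ losses. This is exactly where the hypothesis $\ell_1,\ell_2>4$ is used; any residual $n^{\varepsilon}$ loss would in any case be absorbed into the final $\varepsilon$.
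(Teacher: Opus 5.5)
Your proposal is correct and follows exactly the route the paper intends; the paper states this corollary without a separate proof, as a direct specialization of Theorem \ref{th1}. You substitute $\mathsf{a}=r_{\ell_1}$ and $\mathsf{b}=r_{\ell_2}$, use $r_\ell(n)\ll n^{\ell/2-1}$ to get $\norm{\mathsf{a}}_2\ll X^{(\ell_1-1)/2}$ and $\norm{\mathsf{b}}_2\ll Y^{(\ell_2-1)/2}$, combine exponents to $X^{3/8}Y^{-3/4}X^{\ell_1/2}Y^{\ell_2/2}(XY)^{\varepsilon}$, and compare with the trivial size $X^{\ell_1/2}Y^{\ell_2/2}$ to get the range $X^{1/2}<Y<X^{1-\varepsilon}$, all of which is correct.
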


The next result establishes cancellation in the summatory function involving $\mathcal{A}_f(n)$ over polynomials of the form $x+y^k$ for $k\geqslant 2$. Earlier, Yu \cite{yu} considered the sum over $x^2+y^2$ in the case of the divisor function, and Acharya \cite{acharya} proved the following bound

\begin{equation*}
    \mathop{\sum\sum}_{\substack{n,m\in \mathbb{Z} \\ n^2+m^2 \leqslant X}} \mathcal{A}_f(n^2+m^2) \ll_{f,\varepsilon} X^{1/2+\varepsilon}, 
\end{equation*}
for a holomorphic cusp form $f$  of even weight $k$ for the group $\Gamma_0(4N)$ with $N\in \mathbb{N}$. As a second result of this article, we prove cancellations in a general sum and get some interesting consequences in particular cases. We prove the following theorem. 

\begin{theorem}\label{th2}
Let $\mathsf{c}(n)$ be any complex sequence and $\mathcal{A}_f(n)$ be defined as in Theorem \ref{th1}.  Let $X$ be a variable, and $\mathsf{S} \subset[X,2X]$ such that $|\mathsf{S}| \asymp X^{1/k}$ with a natural number $k > 1$. We have
\begin{align*}
  \sum_{X \leqslant n \leqslant 2X}\,\sum_{m \in \mathsf{S}} \, \mathsf{c}(n)\, \mathcal{A}_{f}(n+m) \ll_{f, \varepsilon } \norm{\mathsf{c}}_{2} X^{1/2} \left({\norm{\gamma_{\ell}}_2 } \right)^{1/2^{\ell +1}}X^{{\frac{3}{ 2^{\ell +4}} - \frac{1}{4k} + \varepsilon  }},
\end{align*}
where $\ell$ is least such that $2^{\ell+1} > k (1-\delta_1)$ for some $\delta_1 \geqslant  1/2$ and 
\begin{align*}
 |\gamma_{\ell}(m)| =  \mathop{\sum_{m_1 \in \mathsf{S}}\,\sum_{m_2 \in \mathsf{S}}\,...\,\sum_{m_{2^{\ell+1}} \in \mathsf{S}}}_{m \,=\,(m_1-m_2) + (m_3 - m_4) \,+\, ...\, + \,(m_{2^{\ell+1}-1}- m_{2^{\ell+1}}) } \,1. 
 \end{align*}
\end{theorem}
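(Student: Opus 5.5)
The plan is to rerun the argument behind Theorem~\ref{th1} with $\mathsf{a}=\mathsf{c}$ restricted to $[X,2X]$ and $\mathsf{b}=\mathbf{1}_{\mathsf{S}}$. The one change comes at the stage where the $m$-variable is handled. There the $\ell^2$-bound used in Theorem~\ref{th1} (which only sees $|\mathsf{S}|$, hence costs $Y^{1/2}=X^{1/(2k)}$ without using structure) will be replaced by a higher moment of the exponential sum over $\mathsf{S}$. That higher moment is what produces $\gamma_\ell$.

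First, I would detect the condition $n+m=r$ with the Duke--Friedlander--Iwaniec delta symbol \cite[Eq. 20.157]{R1}, with modulus parameter $Q\asymp X^{1/2}$. This separates variables into
\begin{equation*}
\sum_{q\leqslant Q}\frac{1}{q}\sideset{}{^*}\sum_{a \bmod q}\int g(q,x)\sum_{r}\mathcal{A}_f(r)e\Bigl(-\frac{ar}{q}\Bigr)\cdots\;\Bigl(\sum_{n}\mathsf{c}(n)e\Bigl(\frac{an}{q}\Bigr)\cdots\Bigr)\Bigl(\sum_{m\in\mathsf{S}}e\Bigl(\frac{am}{q}\Bigr)\cdots\Bigr)\,dx .
\end{equation*}
Second, I would apply $\mathrm{GL}(2)$ Voronoi summation to the $r$-sum. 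This gives a dual sum of length about $Q^2/X$ times the $x$-oscillation, together with a Kloosterman-type twist $e(\bar a r/q)$, exactly as in the proof of Theorem~\ref{th1}. Third, I would apply Cauchy--Schwarz in $n$, which pulls out $\norm{\mathsf{c}}_2X^{1/2}$ up to the arithmetic-mean factors. I would also use Rankin--Selberg (\autoref{bound for gl2 coeff}) on the dual coefficients.

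What remains is a sum over $q,a$ (and the dual variable) of the exponential sum $E(a/q)=\sum_{m\in\mathsf{S}}e(am/q)$, weighted by a bounded kernel. Here I would use H\"older with exponent $2^{\ell+1}$ in the $(q,a)$-variables and then expand
\begin{equation*}
|E(\theta)|^{2^{\ell+1}}=\sum_{m}\gamma_\ell(m)e(m\theta).
\end{equation*}
This converts the $m$-sum into a linear sum weighted by $\gamma_\ell$ over a range of size $\ll 2^{\ell+1}X$. I would then treat this linear sum as in Theorem~\ref{th1}, by Cauchy--Schwarz against the remaining $\mathcal{A}_f$/Kloosterman data, which yields the factor $\norm{\gamma_\ell}_2$. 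Taking the $2^{\ell+1}$-th root gives $(\norm{\gamma_\ell}_2)^{1/2^{\ell+1}}$, and the conjugate H\"older factor carries the power of $X$ that combines with $Q$ and $|\mathsf{S}|\asymp X^{1/k}$ into $X^{3/2^{\ell+4}-1/(4k)}$. The choice of $\ell$, namely $2^{\ell+1}>k(1-\delta_1)$, is made so that the $2^{\ell+1}$-fold sumset of $\mathsf{S}$ has length comparable to $X$. This ensures that the dual length after expansion does not exceed the range where the Theorem~\ref{th1} bound for a sum of length $Y<X^{1-\varepsilon}$ applies.

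The main obstacle I expect is the H\"older step. The weights multiplying $E(a/q)$ (the Voronoi dual sum and the $x$-integral) must be controlled uniformly in $L^{2^{\ell+1}/(2^{\ell+1}-1)}$, and not merely in $L^2$. I would first try to bound them pointwise by Deligne/Weil for the Kloosterman sums and by a large-sieve-type mean-value estimate over $q\leqslant Q$, $a \bmod q$ for the dual $\mathcal{A}_f$-sums, accepting $X^\varepsilon$ losses. The exponent bookkeeping (balancing $Q$ against the lengths) then has to be optimized to land exactly on $3/2^{\ell+4}$.
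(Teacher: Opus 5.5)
There is a genuine gap. It sits at the H\"older step, but not for the $L^{p'}$ reason you anticipate. Write $p=2^{\ell+1}$, $E(\theta)=\sum_{m\in\mathsf{S}}e(m\theta)$ as in your proposal, $F(\theta)=\sum_{2X\leqslant r\leqslant 4X}\mathcal{A}_f(r)e(r\theta)$ and $C(\theta)=\sum_{X\leqslant n\leqslant 2X}\mathsf{c}(n)e(n\theta)$. Up to $X^{\varepsilon}$, your delta/Voronoi set-up is a discretisation of
\[
\sum_{X\leqslant n\leqslant 2X}\sum_{m\in\mathsf{S}}\mathsf{c}(n)\mathcal{A}_f(n+m)=\int_0^1 F(\theta)\,C(-\theta)\,E(-\theta)\,d\theta ,
\]
and the Voronoi step plays the role of Wilton's bound $\sup_\theta|F(\theta)|\ll_{f,\varepsilon}X^{1/2+\varepsilon}$.

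With the unweighted H\"older you describe (partner factor in $L^{p/(p-1)}$), the moment of $E$ is completely decoupled from $\mathsf{c}$ and from the dual $\mathcal{A}_f$-sum. So there is nothing left to apply Cauchy--Schwarz against. Expanding the moment pairs $\gamma_\ell(m)$ with sums of Ramanujan sums, and its natural bound (Parseval, or the large sieve for $E^{p/2}$) is the energy $\int_0^1|E|^{p}=\gamma_\ell(0)$, not $\norm{\gamma_\ell}_2$.

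If you intend the weighted form $\int w|E|\leqslant(\int w)^{1-1/p}(\int w|E|^{p})^{1/p}$ with $w=|F||C|$, then Parseval in $m$ does produce $\norm{\gamma_\ell}_2$, and the $L^{p'}$ obstacle disappears. You get $\norm{\mathsf{c}}_2X^{1/2+\varepsilon}\norm{\gamma_\ell}_2^{1/p}$, which is at most the stated right-hand side precisely when $p\leqslant 3k/2$.

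In either form the outcome is never better than the case $p=2$, because $\norm{\gamma_\ell}_2\geqslant\gamma_\ell(0)\geqslant|\mathsf{S}|^{p/2}$. So your premise, that a higher moment of $E$ beats the $\ell^2$ bound, is false here. Moreover, nothing in your scheme produces the factor $X^{3/2^{\ell+4}-1/(4k)}$; it is not something the final bookkeeping will generate.

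That factor comes from the paper's route, which never reruns the delta method. The paper Weyl-differences in $n$: it applies Cauchy--Schwarz $\ell+1$ times, doubling the number of $\mathsf{S}$-variables each time. The first application pulls out $\norm{\mathsf{c}}_2$ and the others use Rankin--Selberg. It then separates the smooth weights by Fourier inversion and reaches $\mathcal{V}_{X,\mathsf{S}}=\sum_n\sum_m\alpha(n)\beta_\ell(m)\mathcal{A}_f(n+m)$, with $|\beta_\ell|\leqslant|\gamma_\ell|$ and $\norm{\alpha}_2\ll X^{1/2}$. Finally it inserts Theorem \ref{th1} as a black box, with $Y\approx X^{2^{\ell+1}/k}$, into $\mathcal{T}_{X,\mathsf{S}}\ll\norm{\mathsf{c}}_2X^{1/2-1/p}\mathcal{V}_{X,\mathsf{S}}^{1/p}$. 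Thus $X^{3/2^{\ell+4}}=(X^{3/8})^{1/p}$ and $X^{-1/(4k)}=(Y^{-1/4})^{1/p}$.

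To repair your own route, keep the $\ell^2$ step you wanted to discard. Cauchy--Schwarz and Parseval in the identity above, together with Wilton's bound, give $\norm{\mathsf{c}}_2|\mathsf{S}|^{1/2}X^{1/2+\varepsilon}$. This already implies the stated inequality for every $\ell\geqslant 0$, since one only needs $|\mathsf{S}|^{1/2}\ll\norm{\gamma_\ell}_2^{1/p}X^{3/(8p)-1/(4k)}$ with $|\mathsf{S}|\asymp X^{1/k}$:
\begin{itemize}
\item the lower bound $\norm{\gamma_\ell}_2\geqslant\gamma_\ell(0)\geqslant|\mathsf{S}|^{p/2}$ gives this when $p\leqslant 3k/2$;
\item the lower bound $\norm{\gamma_\ell}_2\geqslant\norm{\gamma_\ell}_1(pX+1)^{-1/2}=|\mathsf{S}|^{p}(pX+1)^{-1/2}$, valid because $\gamma_\ell$ is supported in $[-pX/2,pX/2]$, gives it when $p\geqslant k/2$.
\end{itemize}
This route also avoids a step in the paper's reduction. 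There, Theorem \ref{th1}, which is stated for $1\leqslant m\leqslant Y$, is applied with $Y\approx X^{2^{\ell+1}/k}$ to $\beta_\ell$. The support of $\beta_\ell$ has cardinality $\ll X^{2^{\ell+1}/k}$, but it is spread over an interval of length $\asymp X$.
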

\begin{corollary} \label{cor thm2}
    We have
    \begin{equation*}
      \sum_{X \leqslant n \leqslant 2X} \sum_{X^{1/k} \leqslant m \leqslant (2X)^{1/k}}  \mathcal{A}_f(n+m^k) \ll_{f,\varepsilon} X^{1+ \frac{1}{k} - \left( \frac{1}{4k} + \frac{1}{2^{\ell+4}}\right) + \varepsilon},
    \end{equation*}
    where $\ell \in \mathbb{N}$ is least such that $2^{\ell+1} > \text{max}\left\{k(1-\delta_1),\, k \log k\right\} $ for some $\delta_1 \geqslant 1/2$.
\end{corollary}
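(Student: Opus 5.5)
We deduce Corollary~\ref{cor thm2} from Theorem~\ref{th2} with the choices $\mathsf{S}=\{m^k : X^{1/k}\leqslant m\leqslant (2X)^{1/k}\}$ and $\mathsf{c}(n)=1$ for $X\leqslant n\leqslant 2X$. Strictly, $\mathsf{S}$ lies in $[X,2X]$ and $|\mathsf{S}|\asymp X^{1/k}$, so the hypotheses of Theorem~\ref{th2} are met. The map $m\mapsto m^k$ is injective, so the left-hand side of the corollary is exactly the sum in Theorem~\ref{th2}. We also have $\norm{\mathsf{c}}_2\asymp X^{1/2}$. Theorem~\ref{th2} therefore gives
\begin{equation*}
 \ll X^{1+\frac{3}{2^{\ell+4}}-\frac{1}{4k}+\varepsilon}\,\norm{\gamma_\ell}_2^{1/2^{\ell+1}},
\end{equation*}
and the remaining work is to bound $\norm{\gamma_\ell}_2$.

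Set $s=2^{\ell+1}$. By definition, $\norm{\gamma_\ell}_2^2$ counts the solutions of
\begin{equation*}
 \sum_{i=1}^{s}(-1)^{i+1}m_i^k=\sum_{i=1}^{s}(-1)^{i+1}m_i'^k
\end{equation*}
with all variables in $[X^{1/k},(2X)^{1/k}]$. After regrouping, this equation has $s$ positive and $s$ negative $k$-th powers on each side in total. It is therefore the Vinogradov--Waring-type mean value $J_{s,k}(P)=\int_0^1 \big|\sum_{m\sim P} e(\alpha m^k)\big|^{2s}\,d\alpha$ with $P=X^{1/k}$. We will use the classical bound $J_{s,k}(P)\ll P^{2s-k+\varepsilon}$, which is valid once $s$ exceeds a threshold of size $\asymp k\log k$ (Vinogradov/Wooley, or via the now-proved main conjecture, a suitable range of $s$). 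This is exactly where the condition $2^{\ell+1}>k\log k$ enters. We take this mean value estimate as known input; justifying the precise admissible range of $s$ is the only delicate point. We would cite Wooley's efficient congruencing or a Hua-type bound, adjusting the constant in $k\log k$ if needed.

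With this bound, $\norm{\gamma_\ell}_2\ll (X^{1/k})^{(2s-k)/2+\varepsilon}=X^{\frac{s}{k}-\frac12+\varepsilon}$. Hence $\norm{\gamma_\ell}_2^{1/s}\ll X^{\frac1k-\frac{1}{2s}+\varepsilon}=X^{\frac1k-\frac{1}{2^{\ell+2}}+\varepsilon}$. Inserting this gives the exponent
\begin{equation*}
 1+\frac1k-\frac1{4k}+\frac{3}{2^{\ell+4}}-\frac{4}{2^{\ell+4}}=1+\frac1k-\Big(\frac1{4k}+\frac1{2^{\ell+4}}\Big),
\end{equation*}
which is the claimed bound, the $\varepsilon$'s being renamed. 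The minimality of $\ell$ under both constraints ($2^{\ell+1}>k(1-\delta_1)$ from Theorem~\ref{th2}, and $2^{\ell+1}>k\log k$ for the mean value theorem) is built into the choice of $\ell$. The main obstacle is securing the mean value bound with the stated dependence of $s$ on $k$; everything else is bookkeeping.
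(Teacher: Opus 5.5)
Your reduction is the paper's own. You apply Theorem~\ref{th2} with $\mathsf{c}\equiv 1$ and $\mathsf{S}$ the $k$-th powers in $[X,2X]$, plus $\norm{\gamma_\ell}_2^2\ll X^{2^{\ell+2}/k-1}$, which is exactly Lemma~\ref{gamma}. Your mean value $\int_0^1|\sum_{m}e(\alpha m^k)|^{2s}\,d\alpha$ is the same count the paper writes as $\sum_M r_{\ell,k}(M)^2$, and your exponent arithmetic is right. But the two points you treat as routine are where the argument fails, and the paper's two-line proof fails at the same places.

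First, $\int_0^1|\sum_{m\sim P}e(\alpha m^k)|^{2s}\,d\alpha\ll P^{2s-k+\varepsilon}$ is not known for $s\asymp k\log k$. Hua's lemma gives it for $s\geqslant 2^{k-1}$, and the Vinogradov main conjecture (Bourgain--Demeter--Guth, Wooley) gives it for $s\geqslant k(k+1)/2$; later refinements are still of order $k^2$. Wooley's $k\log k$ results concern $G(k)$ and rest on smooth Weyl sums, which give nothing of this shape for the full count. So no change of constant in $k\log k$ repairs this, and Lemma~\ref{gamma} asserts the same unproved range.

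Second, Theorem~\ref{th2} is claimed only for the least $\ell$ with $2^{\ell+1}>k(1-\delta_1)$. Since $k(1-\delta_1)\leqslant k/2$, such an $\ell$ has $2^{\ell+1}\leqslant k$ unless it is the smallest admissible value, whereas you need $2^{\ell+1}>k\log k$. This is not cosmetic. The proof of Theorem~\ref{th2} inserts $Y\asymp X^{2^{\ell+1}/k}$ into Theorem~\ref{th1}, which needs $Y<X^{1-\varepsilon}$, i.e.\ $2^{\ell+1}<k$; that is incompatible with $2^{\ell+1}>k\log k$ for $k\geqslant 3$. Your remark that the minimality ``is built into the choice of $\ell$'' hides exactly this conflict.

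The corollary itself is nevertheless true, with room to spare, by a much simpler argument. Because $\mathsf{c}\equiv 1$, for each fixed $m$ the $n$-sum is a complete sum over an interval:
\[
\sum_{X\leqslant n\leqslant 2X}\mathcal{A}_f(n+m^k)=\sum_{X+m^k\leqslant N\leqslant 2X+m^k}\mathcal{A}_f(N)\ll_{f,\varepsilon}X^{1/2+\varepsilon}.
\]
This follows from the classical Wilton-type bound $\sum_{N\leqslant x}\mathcal{A}_f(N)e(N\alpha)\ll_{f,\varepsilon}x^{1/2+\varepsilon}$ (here only $\alpha=0$ is needed), valid for holomorphic and Maass $f$ alike. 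Summing over the $\asymp X^{1/k}$ values of $m$ gives $X^{1/2+1/k+\varepsilon}$. This is smaller than the stated bound, since $\frac{1}{4k}+\frac{1}{2^{\ell+4}}\leqslant\frac18+\frac1{16}<\frac12$. The argument needs neither Theorem~\ref{th2} nor any Waring-type mean value, and it makes the choice of $\ell$ irrelevant.
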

\begin{proof}
   Taking $\mathsf{c}(n) = 1$ for all $n \in \mathbb{N}$ and $\mathsf{S} =\{ n \in \mathbb{N} : X \leqslant n^k \leqslant 2X \}$ with $k\geqslant 2$. We get our desired result using Lemma \ref{gamma} and Theorem \ref{th2}.  
\end{proof}
\begin{remark}
    For $\mathsf{c}(n) = \Lambda(n)$, $\mu(n)$ or $\tau_k(n)$ in Theorem \ref{th2}, we can obtain the same strength bound as in Corollary \ref{cor thm2}. 
\end{remark}

\section{Preliminaries} \label{preliminaries}

\subsection{DFI Delta Method}\label{delta}

This paper will use a version of the delta method due to Duke, Friedlander, and Iwaniec. We will use the expansion $(20.157)$ given in Chapter $20$ of \cite{R1}. Let $\delta : \mathbb{Z}\to \{0,1\}$ be defined by
\[
\delta(n,m)=
\begin{cases}
	1 &\text{if}\,\,n=m \\
	0 &\text{otherwise}
\end{cases}
\]
Then for $n,m\in\mathbb{Z}\cap [-2L,2L]$, we have
\begin{equation}\label{a1}
	\delta(n,m)=\frac{1}{Q} \sum_{1 \leqslant q \leqslant Q} \frac{1}{q} \, \ \sideset{}{^\star} \sum_{a\bmod q}e\left(\frac{(n-m) a}{q}\right)\int_{\mathbb{R}} \psi(q,x)e\left(\frac{(n-m) x}{q	Q}\right) d x, 
\end{equation} where $Q=2L^{1/2}$. Following properties of the function $\psi(q,u)$ are of our interest (see $(20.158)$ and $(20.159)$ of \cite{R1}, and \cite[Lemma 15]{R2})

\begin{align}\label{Delta}
	&\psi(q,x)=1+h(q,x),\,\,\,\,\text{with}\,\,\,\,h(q,x)=\mathcal{O}\left(\frac{Q}{q }\left(\frac{q}{Q}+|x|\right)^A\right),\\
	&\psi(q,x)\ll |x|^{-A}, \label{2nd property}\\
	&x^j \frac{\partial^j}{ \partial x^j} \psi(q, x) \ll \  \min \left\lbrace \frac{Q}{q}, \frac{1}{|x|} \right\rbrace \  \log Q,
\end{align}
for any $A>1$,  $j \geqslant 1$.  In particular, the second property (\autoref{2nd property}) implies that the effective range of integral in \autoref{a1} is $[-L^{\epsilon}, L^{\epsilon}]$. It also follows that if $q \ll Q^{1- \epsilon}$ and $  x \ll Q^{- \epsilon} $, then  $ \psi(q, x)$ can
be replaced by $1$ at the cost of a negligible error term. If $ q \gg Q^{1- \epsilon}$, then we get $ x^j \frac{\partial^j}{ \partial x^j} \psi(q, x) \ll Q^{\epsilon}$, for any $ j \geqslant 1$. If $ q \ll  Q^{1- \epsilon}$  and $  Q^{- \epsilon} \ll |x| \ll   Q^{ \epsilon}$, then $ x^j \frac{\partial^j}{ \partial x^j} \psi(q, x) \ll Q^{\epsilon}$, for any $ j \geqslant 1$. Hence, we can view $  \psi(q, x)$ as a nice weight function in all cases.\\ 

\subsection{\texorpdfstring{$\mathrm{SL}(2,\mathbb{Z})$ Automorphic forms}{}} \label{GL2 forms}
 In this section, we shall briefly recall some fundamental facts about the $\mathrm{SL}(2,\mathbb{Z})$ automorphic forms, Voronoi summation formula, and some other results used in our analysis. Let $f$ be a primitive holomorphic Hecke eigenform of integral weight $k$ on the full modular group $\mathrm{SL}(2,\mathbb{Z})$. The normalized (i.e., $\mathcal{A}_f(1)=1$) Fourier expansion of $f$ at the cusp $\infty$ is given by
\begin{equation*}
    f(z) = \sum_{n=1}^\infty \mathcal{A}_f(n)n^{(k-1)/2}e(nz). 
\end{equation*}
Analogously, let $f$ be a primitive Maass cusp form on the group $\mathrm{SL}(2,\mathbb{Z})$ with Laplacian eigenvalue $\frac{1}{4} + \nu^2$. The normalised Fourier expansion of $f$ at the cusp $\infty$ is given by
\begin{equation*}
    f(z) = 2\sqrt{y} \sum_{n \neq 0} \mathcal{A}_f(n) K_{i\nu}(2\pi|n|y) e(nx),
\end{equation*}
where $K_{i \nu}$ denotes the $K$-Bessel function and $\mathcal{A}_f(1)=1$. By the Rankin-Selberg theory, the Fourier coefficients $\mathcal{A}_f(n)$ satisfy the following bound on average:
\begin{equation} \label{bound for gl2 coeff}
    \sum_{n \leqslant X} |\mathcal{A}_f(n)|^2 = c_fX + \mathcal{O}(X^{3/5}),
\end{equation}
for some constant $c_f>0$. The Ramanujan-Petersson conjecture predicts that $\mathcal{A}_f(n) \ll n^\varepsilon$. This has been proved by Deligne in the case of holomorphic cusp forms, where he proves that $\mathcal{A}_f(n) \leq d(n)$. For the case of Maass cusp form the best known result is due to Kim and Sarnak (see \cite{Kim}), which is $\mathcal{A}_f(n) \ll n^{7/64+\varepsilon}$. 
\begin{lemma} \label{voronoi}
	{\bf Voronoi summation formula for $\mathrm{GL}(2)$}:
Let $g$ be a compactly supported smooth function, supported on the interval $(0, \infty)$. Let $\mathcal{A}_f(n)$ be the $n$-th Fourier coefficients of a holomorphic or Hecke-Maass cusp form $f$ for $\mathrm{SL}(2, \mathbb{Z})$. Let $q \in \mathbb{N}$ and $a \in \mathbb{Z}$ such that $(a, q) = 1$ with $a \overline{a} \equiv 1 \bmod (q)$. Then we have
	\begin{equation}\label{varequation}
		\sum_{n=1}^\infty \mathcal{A}_f(n)\, e_q(an)\, g\left({n}\right) = \frac{Y}{q}\, \sum_{\pm} \sum_{n=1}^\infty\, \mathcal{A}_f(n) \,e_q(\mp\, \overline{a}n)\, G^{\pm} \left(\frac{n}{q^2}\right),
	\end{equation}
where $e_q(x)= e^{\frac{2 \pi i x}{q}}$. When $f$ is a holomorphic cusp form with weight $k$, we have
 \begin{align}\label{holo}
  G^{+} (y)= 2 \pi i^{k}\, \int_0^\infty g(x)  J_{k-1 }  \left( 4\pi \sqrt{xy}\right) dx ,\,\,\,\text{and} \,\,\,\,\, G^{-} (y) = 0. \end{align}
 When $f$ is a Maass cusp form with the Laplacian eigenvalue $1/4 + \nu^2$, $\nu >0$, we have
 \begin{align}\label{maass2}
		&G^{+} (y)= \frac{i\pi}{\sinh( \pi \nu)} \int_0^\infty g(x) \left\lbrace  J_{2i\nu } - J_{-2i\nu }\right\rbrace \left( 4\pi \sqrt{xy}\right) dx,  \,\,\,\,\text{and}
	\end{align} 
	\begin{align}\label{maass1}
 &G^{-} (y)= \epsilon_f\,4\cosh( \pi \nu) \int_0^\infty g(x)  K_{2i\nu }  \left( 4\pi \sqrt{xy}\right) dx,
 \end{align} 
where $\epsilon_f$ is the eigenvalue of $f$ under the reflection operator and $J_{2i\nu }$, $ K_{2i\nu }$ represent the Bessel functions of the first and second kind, respectively.
\end{lemma}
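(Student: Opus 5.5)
We would prove Lemma \ref{voronoi} the classical way. First we derive a functional equation for the additively twisted $L$-function
\[
L(s,f,a/q)=\sum_{n\ge1}\mathcal{A}_f(n)\,e_q(an)\,n^{-s}
\]
from the modularity of $f$. Then we pass to the sum with weight $g$ by Mellin inversion and contour shifting. Finally we recognise the resulting inverse Mellin transforms as the Bessel integrals in \eqref{holo}--\eqref{maass1}. Throughout, the prefactor in \eqref{varequation} should come out as $1/q$; we read the symbol $Y$ there as this normalising factor.

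\textbf{Step 1 (holomorphic case, functional equation).} Take
\[
\gamma=\begin{pmatrix}-\overline{a} & (a\overline{a}-1)/q\\ q & -a\end{pmatrix}\in \mathrm{SL}(2,\mathbb{Z}).
\]
A direct computation gives $\gamma\bigl(\tfrac{a}{q}+\tfrac{iu}{q}\bigr)=-\tfrac{\overline a}{q}+\tfrac{i}{qu}$ for $u>0$. Modularity therefore yields
\[
f\!\left(\tfrac{a+iu}{q}\right)=(iu)^{-k}\,f\!\left(\tfrac{-\overline a}{q}+\tfrac{i}{qu}\right).
\]
Integrate both sides against $u^{s+(k-1)/2}\,du/u$ and use the rapid decay of $f$ at the cusp. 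This gives entire continuation of
\[
\Lambda(s,a/q)=\left(\tfrac{q}{2\pi}\right)^{s}\Gamma\!\left(s+\tfrac{k-1}{2}\right)L(s,f,a/q),
\]
boundedness in vertical strips, and the functional equation $\Lambda(s,a/q)=i^k\,\Lambda(1-s,-\overline a/q)$.

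\textbf{Step 1 (Maass case).} We work with $n\neq0$ and use $\mathcal{A}_f(-n)=\epsilon_f\mathcal{A}_f(n)$. We twist the Mellin transform of $f$ along the same vertical line, and also of $\partial_x f$, which produces the odd part. This gives functional equations for the two combinations $\sum_{n\ne0}\mathcal{A}_f(n)e_q(an)|n|^{-s}(\operatorname{sgn} n)^{\delta}$ with $\delta\in\{0,1\}$. The gamma factors are $\Gamma\bigl(\tfrac{s+\delta+i\nu}{2}\bigr)\Gamma\bigl(\tfrac{s+\delta-i\nu}{2}\bigr)$, and $a/q$ is exchanged with $-\overline a/q$.

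\textbf{Step 2 (Mellin inversion and contour shift).} Write
\[
\sum_n\mathcal{A}_f(n)e_q(an)g(n)=\frac{1}{2\pi i}\int_{(2)}\tilde g(s)\,L(s,f,a/q)\,ds,
\]
where $\tilde g(s)$ is the Mellin transform of $g$ (entire and rapidly decaying, since $g$ is smooth with compact support in $(0,\infty)$). Shift the contour to $\operatorname{Re}s=-1$; no poles are crossed because $f$ is cuspidal. Insert the functional equation and expand $L(1-s,f,-\overline a/q)$ as an absolutely convergent Dirichlet series. Interchanging sum and integral then gives
\[
\frac1q\sum_{\pm}\sum_n \mathcal{A}_f(n)\,e_q(\mp\overline a n)\,G^{\pm}\!\left(\tfrac{n}{q^2}\right),
\]
where each $G^{\pm}(y)$ is an explicit integral of $\tilde g(s)$ times a ratio of gamma factors times $(4\pi^2 y)^{s}$-type powers. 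In the Maass case we recombine the $\delta=0$ and $\delta=1$ equations to separate $e_q(\overline a n)$ from $e_q(-\overline a n)$.

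\textbf{Step 3 (Bessel identification).} This is where we expect the main work to lie. Substitute $\tilde g(s)=\int_0^\infty g(x)x^{s-1}\,dx$ and exchange the order of integration. The inner contour integrals then become Mellin–Barnes representations of Bessel functions:
\begin{itemize}
\item the ratio $\Gamma\bigl(\tfrac{k-1}{2}+w\bigr)/\Gamma\bigl(\tfrac{k+1}{2}-w\bigr)$ is the Mellin transform of $J_{k-1}(4\pi\sqrt{xy})$ in the appropriate variable;
\item in the Maass case, the products $\Gamma\Gamma/\Gamma\Gamma$ produce $\frac{1}{\sinh\pi\nu}(J_{2i\nu}-J_{-2i\nu})$ and $\cosh(\pi\nu)K_{2i\nu}$ after applying the reflection formula.
\end{itemize}
The delicate points are tracking the exact constants ($2\pi i^k$, $i\pi/\sinh\pi\nu$, $4\epsilon_f\cosh\pi\nu$) and justifying the interchanges. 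The latter follow from the rapid decay of $\tilde g$ together with Stirling's formula on the shifted line, exactly as in the standard treatments in Iwaniec–Kowalski.
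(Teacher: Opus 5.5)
Your proposal is correct and follows the standard derivation that the paper defers to: the paper only cites Kowalski--Michel--VanderKam (Appendix A.4), which proves the formula via functional equations for the additive twists, Mellin inversion with a contour shift (you also need Phragm\'en--Lindel\"of to make $L(s,f,a/q)$ polynomially bounded in the strip, since $\tilde g$ decays only faster than any power), and Mellin--Barnes identification of the Bessel kernels. Your unjustified Maass step is fine, because on the line $\mathrm{Re}\, z=a/q$ one has $\gamma'(z)=(iqy)^{-2}=-1/(q^2y^2)$, which is real, so $\partial_x f(a/q+iy)=-\frac{1}{q^2y^2}\,\partial_x f\bigl(-\overline{a}/q+i/(q^2y)\bigr)$; the duplication and reflection formulas then give exactly the constants $2\pi i^k$, $i\pi/\sinh(\pi\nu)$ and $4\epsilon_f\cosh(\pi\nu)$, and reading the stray $Y$ as $1$ (prefactor $1/q$) is right for $g(n)$ on the left.
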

\begin{proof}
	See \cite[Appendix $A.4$]{r12}.
\end{proof}

\begin{remark}
 If the function $g$ is supported on $[Y, 2Y]$ satisfies $x^jg^{(j)}(x) \ll_j 1$, using the properties of the Bessel's functions given in Lemma \ref{L14} and then repeated integration by parts, we can easily deduce that the integral $G^{\pm}(n/q^2)$ is negligibly small if $n \gg q^2(qY)^{\epsilon}/Y$. Hence, the sum on the right-hand side of \autoref{varequation} is essentially supported on $n \ll q^2(qY)^{\epsilon}/Y$. For smaller values of $n$, we will use the trivial bound $G^{\pm}(n/q^2) \ll Y$.   
\end{remark}

\begin{lemma}\label{gamma}
For a natural number $k >1$, let $\mathsf{S} = \{m \in \mathbb{Z}: X \leqslant m^k \leqslant2X\} \subset [X, 2X]$. Let $\gamma_{\ell}(m)$ be an arithmetical function with
\begin{align*}
 |\gamma_{\ell}(m) | =  \mathop{\sum_{m_1 \in \mathsf{S}}\,\sum_{m_2 \in \mathsf{S}}\,...\,\sum_{m_{2^{\ell+1}} \in \mathsf{S}}}_{m \,=\,(m_1-m_2) + (m_3 - m_4) \,+\, ...\, + \,(m_{2^{\ell+1}-1}- m_{2^{\ell+1}}) } \,1\,. 
 \end{align*}
For $2^{\ell +1} > k \log k$, we have
\begin{align*}
\norm{\gamma_{\ell}}^2 =  \sum_{m \leqslant X}\,|\gamma_{\ell}(m)|^2 \,\ll\,  X^{2^{\ell+2}/k - 1} .  
\end{align*}
\end{lemma}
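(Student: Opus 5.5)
The plan is to turn $\norm{\gamma_{\ell}}^2$ into a moment of a Weyl sum and then quote a classical mean value estimate from Waring's problem. I read $\mathsf{S}$ as the set of $k$-th powers $m^k\in[X,2X]$, so $|\mathsf{S}|\asymp P:=X^{1/k}$; this is consistent with $|\mathsf{S}|\asymp X^{1/k}$ in Theorem \ref{th2}.

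\textbf{Step 1 (reduction to a moment).} Put
\[
F(\alpha)=\sum_{X\leqslant m^k\leqslant 2X} e(\alpha m^k).
\]
Since $\gamma_{\ell}(m)$ counts representations of $m$ as a sum of $2^{\ell}$ differences of elements of $\mathsf{S}$, its generating function is
\[
\sum_m \gamma_{\ell}(m)e(\alpha m)=\bigl(F(\alpha)\overline{F(\alpha)}\bigr)^{2^{\ell}}=|F(\alpha)|^{2^{\ell+1}}.
\]
Extending the $m$-sum to all integers only enlarges it. By Parseval,
\[
\norm{\gamma_{\ell}}^2\leqslant\int_0^1|F(\alpha)|^{2^{\ell+2}}\,d\alpha .
\]
This integral counts the solutions of
\[
x_1^k+\dots+x_t^k=y_1^k+\dots+y_t^k,\qquad t=2^{\ell+1},\quad x_i,y_i\asymp P.
\]
The target bound $X^{2^{\ell+2}/k-1}$ is exactly $P^{2t-k}$, the conjectured size of this count once $t$ is large.

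\textbf{Step 2 (mean value bound).} It remains to show $\int_0^1|F|^{2t}\ll P^{2t-k}$ for $t=2^{\ell+1}>k\log k$. I would invoke the classical Vinogradov-type estimate in Waring's problem: when the number of variables exceeds a threshold of order $k\log k$, the moment $\int_0^1|F|^{2t}$ is of the expected order $P^{2t-k}$. More recent thresholds coming from the Vinogradov mean value theorem of Bourgain--Demeter--Guth and Wooley would serve equally well.

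If I had to prove this directly, I would run the Hardy--Littlewood dissection.
\begin{itemize}
\item On the major arcs, approximate $F$ by $q^{-1}S(q,a)v(\beta)$. The singular series converges and the singular integral is $\ll P^{2t-k}$, which gives the main term.
\item On the minor arcs, use a Weyl-type bound $\sup|F|\ll P^{1-\sigma(k)+\varepsilon}$ together with a mean value $\int_0^1|F|^{2s}\ll P^{2s-k+\eta}$ for a smaller $s$, taken from Vinogradov's mean value theorem.
\end{itemize}
Combining the two via $|F|^{2t}\leqslant \sup|F|^{2t-2s}|F|^{2s}$ saves a power of $P$ once $t$ exceeds the threshold. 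This absorbs the $P^{\eta}$ and gives the stated bound, with no $\varepsilon$-loss because the major arcs dominate.

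\textbf{Main obstacle.} The real difficulty is the minor-arc estimate in Step 2: that is precisely where the hypothesis $2^{\ell+1}>k\log k$ is needed. In a paper of this kind I would not redo it, but cite Vaughan's \emph{The Hardy--Littlewood method} (the chapters on Vinogradov's methods) for the bound $\int_0^1|F|^{2t}\ll P^{2t-k}$ in this range of $t$. The remaining bookkeeping is routine: the dyadic restriction $X\leqslant m^k\leqslant 2X$ only decreases counts, and the sign conventions in $\gamma_{\ell}$ do not matter.
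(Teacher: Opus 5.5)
Your Step~1 is correct, and it is in substance the paper's reduction. The paper rewrites $\sum_m|\gamma_{\ell}(m)|^2$ as $\sum_N r(N)^2$, where $r(N)$ counts representations of $N$ as a sum of $t=2^{\ell+1}$ elements of $\mathsf{S}$; that is exactly your $\int_0^1|F(\alpha)|^{2t}\,d\alpha$. The paper then uses a pointwise bound $r(N)\ll N^{t/k-1}$, a $t$-variable Waring estimate. You only need the $2t$-variable moment, which is a weaker input and a real advantage of your formulation.

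The gap is Step~2. The bound $\int_0^1|F(\alpha)|^{2t}\,d\alpha\ll P^{2t-k}$ is not known for $t>k\log k$, and it fails in the first admissible case. Take $k=2$, $\ell=0$, so $t=2>2\log 2$ (natural logarithm). Your integral then counts solutions of $x_1^2+x_2^2=x_3^2+x_4^2$ with all $x_i\asymp P$. The factorization $(x_1-x_3)(x_1+x_3)=(x_4-x_2)(x_4+x_2)$ shows there are $\gg P^2\log P$ of them, whereas the claimed bound is $X^{4/2-1}=P^2$.

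For general $k$, the dissection you sketch cannot reach $k\log k$. You need an auxiliary estimate $\int_0^1|F|^{2s}\ll P^{2s-k+\eta}$ with $\eta$ small enough to be beaten by a Weyl saving on the remaining $2t-2s$ variables. Vinogradov's mean value theorem supplies this only for $s\ge k(k+1)/2$ (Bourgain--Demeter--Guth, Wooley), or for $s\gg k^2\log k$ in its classical form. Your argument therefore proves the lemma when $2^{\ell+2}>k(k+1)$, or when $2^{\ell+2}>2^k$ via Hua's lemma, but not under the stated hypothesis. These $k^2$-type thresholds do not ``serve equally well.''

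The results of order $k\log k$ in Waring's problem (Vinogradov, Wooley) are bounds for $G(k)$. They are proved with smooth Weyl sums and give lower bounds for representation numbers, not upper bounds for moments of the complete sum $F$. So the citation to Vaughan's book will not supply this range.

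The paper's proof has the same defect. Theorem~20.2 of Iwaniec--Kowalski covers only Hua's range $2^{\ell+1}>2^k$, and the $k\log k$ improvement attributed to Wooley concerns $G(k)$, not the asymptotic formula or an upper bound of the expected order. To get a correct statement, replace the hypothesis by $2^{\ell+2}>k(k+1)$. Your moment argument then gives a complete proof, with fewer variables than the paper's pointwise route, and the choice of $\ell$ in the corollary to Theorem~\ref{th2} must be changed accordingly.
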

\begin{proof}
We can write
\begin{align*}
 \sum_{m \leqslant X}\,|\gamma_{\ell}(m)|^2 = \mathop{\mathop{\sum_{m \leqslant X}\,\sum_{m_1 \in \mathsf{S}}\,\sum_{m_2 \in \mathsf{S}}\,...\,\sum_{m_{2^{\ell+1}} \in \mathsf{S}}\,1 \,\sum_{n_1 \in \mathsf{S}}\,\sum_{n_2 \in \mathsf{S}}\,...\,\sum_{n_{2^{\ell+1}} \in \mathsf{S}} 
 1}_{m_1 - m_2 + m_3- m_4 + ... + m_{2^{\ell+1}-1} - m_{2^{\ell+1}} = m}}_{n_1 - n_2 + n_3 - n_4  ... + n_{2^{\ell+1}-1} - n_{2^{\ell+1}} = m}    
\end{align*}
\begin{align*}
    \ll\,&\,\,\, \mathop{\sum_{m \leqslant X}\,\sum_{m_1 \in \mathsf{S}}\,\sum_{m_2 \in \mathsf{S}}\,...\,\sum_{m_{2^{\ell+1}} \in \mathsf{S}}\, \,\sum_{n_1 \in \mathsf{S}}\,\sum_{n_2 \in \mathsf{S}}\,...\,\sum_{n_{2^{\ell+1}} \in \mathsf{S}}\, 
 1}_{m_1 + n_2 + m_3+ n_4 + ... + m_{2^{\ell+1}-1} + n_{2^{\ell+1}} = m = n_1 + m_2 + n_3 + m_4 + ... + n_{2^{\ell+1}-1} + m_{2^{\ell+1}}}
 =\,\,\, \mathop{\sum_{m \leqslant X}} |r_{\ell,k}(m)|^2,
\end{align*}
where $r_{\ell,k}(m)$ count the total number of representations of $m$ as the sum of $2^{\ell+1}$ many positive $k$-th powers. From \cite[Theorem $20.2$]{R1}, for $2^{\ell+1} > 2^k$, we have  
\begin{align*}
r_{\ell,k}(m) \asymp m^{2^{\ell+1}/k - 1} .   
\end{align*}
The condition $2^{\ell+1} > 2^k$ was further improved by Wooley to $2^{\ell+1} > k \log k$. Hence, we obtain
\begin{align*}
 \sum_{m \leqslant X}\,|\gamma_{\ell}(m)|^2 \ll\, X^{2^{\ell+2}/k - 1}.   \end{align*}
 This proves our lemma.
\end{proof}

\vspace{0.2cm}

\section{Proof of the Theorem \ref{th1}}
Let $X, Y \geqslant 1$ be two variables with $Y < X^{1-\epsilon}$. Let $\mathsf{a}(n)$, $\mathsf{b}(n)$ be any two complex sequences, and $\mathcal{A}_f(n)$ be the $n$-th Fourier coefficients of a holomorphic or Hecke Maass cusp form $f$ for $\mathrm{SL}(2, \mathbb{Z})$.  Our main object of study is the sum:
\begin{align}\label{S( X, Y)}
	\mathscr{S}_{ X, Y} = \sum_{1 \leqslant n \leqslant  X}\,\,\sum_{1 \leqslant m \leqslant Y} \mathsf{a}(n)\,\mathsf{b}(m)\, \mathcal{A}_f(n+m).
\end{align}
 On applying the Cauchy-Schwarz inequality to the sum over $n$, we arrive at:
\begin{align} \label{S( X, Y) with kappa}
		\mathscr{S}_{ X, Y} \ll  \,\norm{\mathsf{a}}_2 \, \left(\sum_{n =1}^\infty \, V_1\lr{\frac{n}{X}}\left|\sum_{1 \leqslant m \leqslant Y} \mathsf{b}(m)\,\mathcal{A}_f(n+m)\right|^2\right)^{1/2}  =  \norm{\mathsf{a}}_2 \, \left(\kappa(X,Y)\right)^{1/2} ,
\end{align}
	where $V_1$ is a positive smooth bump function supported on $[-2,2]$ with $V_1(x) = 1$ on $[-1,1]$, satisfying $x^j V_1^{(j)}(x) \ll 1$ for all $j \geq 1$, and $\kappa(X,Y)$ can be written as
	\begin{align*}
		\kappa(X,Y) = \sum_{1 \leqslant m_1 \leqslant Y} \,\sum_{1 \leqslant m_2 \leqslant Y}\, \mathsf{b}(m_1)\,\overline{\mathsf{b}(m_2)} \, \sum_{n =1}^\infty \, V_1\lr{\frac{n}{X}} \, \mathcal{A}_f(n+m_1)\,\overline{\mathcal{A}_f(n+m_2)}.     
	\end{align*}
	Changing the variable $n+m_1 \longrightarrow n$ and $m_2 - m_1 = m$, we get
	\begin{align*}
		\kappa(X,Y) = \sum_{1 \leqslant m_1 \leqslant Y} \;\sum_{1-m_1 \leqslant m \leqslant Y-m_1} \, \mathsf{b}(m_1)\,\overline{\mathsf{b}(m+m_1)} \, \sum_{n =1}^\infty \, V_1\lr{\frac{n-m_1}{X}} \, \mathcal{A}_f(n)\,\overline{\mathcal{A}_f(n+m)}. 
	\end{align*}
 By applying the Cauchy-Schwarz inequality to the  sum over $m$ and then $m_1$, we can write
	\begin{align} \label{kappa to omega}
		\kappa(X,Y) \ll \,\norm{\mathsf{b}}_2^2 \, \sqrt{ Y}  \left(\sum_{m\in \mathbb{Z}} W\lr{\frac{m}{Y}}\,\left|\ \sum_{n=1}^\infty V_1 \lr{\frac{n}{X_1}}\mathcal{A}_f(n) \overline{\mathcal{A}_f(n+m)}\ \right|^2 \right)^{1/2},
	\end{align}
where $W$ is a non-negative even smooth function supported on $[-2,2]$ with $W(x)=1$ on $[-1,1]$, satisfying $x^j W^{(j)}(x) \ll 1$ and $X_1 :=  X+Y \asymp  X$. Denote the inner sums as $\omega(X,Y)$, we have 
	\begin{align*}
		\omega(X,Y) = \,\sum_{n_1=1}^\infty \,\sum_{n_2 =1}^\infty\, V_1\lr{\frac{n_1}{X_1}} V_2\lr{\frac{n_2}{X_1}} \mathcal{A}_f(n_1)\,\overline{\mathcal{A}_f(n_2)}\, \sum_{m \in \mathbb{Z}} W\lr{\frac{m}{Y}}\, \overline{\mathcal{A}_f(n_1 +m)}\, \mathcal{A}_f(n_2 +m), 
	\end{align*}
where $V_2$ is another smooth bump function satisfying the same properties as $V_1$, is introduced to symmetrize the $n_1, n_2$ variables after Cauchy–Schwarz. Further, by changing the variable $ n_1 + m$ to $n_3$, we obtain,  
\begin{align*}
	\omega(X,Y) = &\,\sum_{n_1=1}^\infty \,\sum_{n_2 =1}^\infty\, V_1\lr{\frac{n_1}{X_1}} V_2\lr{\frac{n_2}{X_1}}\mathcal{A}_f(n_1)\,\overline{\mathcal{A}_f(n_2)}\, \\
    &\times \sum_{n_3 \in \mathbb{Z}}\, \overline{\mathcal{A}_f(n_3)}\,W\left(\frac{n_3-n_1}{ Y}\right)\, \mathcal{A}_f(n_2 +n_3-n_1).
\end{align*}
After the change of variables $n_3 = n_1 + m$, the support conditions on the
weights $V_1(n_1/X_1)$ and $W(m/Y)$ and $\mathcal{A}_f(n_3) = 0$ for $n_3 \leq 0$ imply that $1 \le n_3 \le 2X_1 + 2Y$.
We therefore introduce a smooth weight function
$V_3 \in C_c^\infty(\mathbb{R})$ such that $V_3(x)=1$ for $x \in [-1,1]$,
$\operatorname{supp}(V_3) \subset [-2,2]$, satisfying $x^j V_3^{(j)}(x) \ll 1$, and define
$
X_2 := 2X_1 + 2Y \asymp X$, we obtain
\begin{align*}
	\omega(X,Y) = &\,\sum_{n_1=1}^\infty \,\sum_{n_2 =1}^\infty\, V_1\lr{\frac{n_1}{X_1}}
    V_2\lr{\frac{n_2}{X_1}}\mathcal{A}_f(n_1)\,\overline{\mathcal{A}_f(n_2)}\\
    &\, \times \sum_{n_3 \in \mathbb{Z}}\, \overline{\mathcal{A}_f(n_3)}\,\mathcal{A}_f(n_2 +n_3-n_1)\,W\left(\frac{n_3-n_1}{ Y}\right)\, V_3\left(\frac{n_3}{ X_2}\right).
\end{align*}
 Since $V_3(n_3/X_2) \equiv 1$ on the effective support of the summation, the insertion of this weight $V_3$ does not change the value of the sum. Now, we detect the equation $n_2+n_3-n_1 = n_4$ with the help of the delta symbol and using the support conditions of $V_1, V_2$ and $V_3$, we introduce the weight function $V_4$ to the $n_4$-sum satisfying the similar properties and support conditions as $V_3$.
With this, we arrive at the following
    \begin{align*}
		\omega(X,Y) \,=& \,\sum_{n_1=1}^\infty\,\mathcal{A}_f(n_1) V_1\left(\frac{n_1}{ X_1}\right)\,\sum_{n_2 =1}^\infty\,\overline{\mathcal{A}_f(n_2)} V_2\left(\frac{n_2}{ X_1}\right)\\
  &\times \, \sum_{n_3 \in \mathbb{Z}}\, \overline{\mathcal{A}_f(n_3)}\,W\left(\frac{n_3-n_1}{ Y}\right)V_3\left(\frac{n_3}{ X_2}\right)
		 \, \\ &\times \sum_{n_4 \in \mathbb{Z}} \,\mathcal{A}_f(n_4) V_4\left(\frac{n_4}{ X_3}\right) \,\delta({n_4 = n_2 + n_3 -n_1 }),
	\end{align*}
    

\noindent where $X_3 = X_2$. Now, using the DFI's expansion of the delta symbol $\delta(n, m)$ given in \autoref{a1} with $Q \asymp \sqrt{ X}$, we can write
\begin{align}\label{L13}
    \omega(X,Y)  =&\, \frac{1}{Q}\,\sum_{q\leqslant Q}\frac{1}{q}\, \  \sideset{}{^\star} \sum_{a\bmod q}\,\int_{\mathbb{R}} \,w(u)\, \psi(q,u)\notag\\
    &\times\, \sum_{n_1 =1}^{\infty}\,\mathcal{A}_f(n_1) \,e\left(\frac{an_1}{q}\right)\,e\left(\frac{u n_1}{qQ}\right)\,V_1\left(\frac{n_1}{ X_1}\right) \notag\\ 
    &\times\, \sum_{n_2 =1}^{\infty}\,\overline{\mathcal{A}_f(n_2)}\, e\left(\frac{-an_2}{q}\right)\,e\left(\frac{-u n_2}{qQ}\right)\,V_2\left(\frac{n_2}{ X_1}\right) \notag\\
    &\times\, \sum_{n_3 =1}^{\infty}\,\overline{\mathcal{A}_f(n_3)}\, e\left(\frac{-an_3}{q}\right)\,e\left(\frac{-u n_3}{qQ}\right)\,V_3\left(\frac{n_3}{ X_2}\right)\,W\left(\frac{n_3-n_1}{ Y}\right) \notag\\
    &\times\, \sum_{n_4 =1}^{\infty}\,\mathcal{A}_f(n_4)\, e\left(\frac{an_4}{q}\right)\,e\left(\frac{u n_4}{qQ}\right)\,V_4\left(\frac{n_4}{ X_3}\right)du + \mathcal{O}(X^{-2025}).
\end{align}
Here, $w$ is a bump function supported on $[-2 X^{\epsilon}, 2 X^{\epsilon}]$ such that $w(u) = 1$, for all $u \in [- X^{\epsilon},  X^{\epsilon}]$, with $w^{(j)}(u) \ll_j 1$. Now we apply the $\mathrm{GL}(2)$ Voronoi summation formula to the $n_1, n_2, n_3$, and $n_4$ sums. We consider the case when $f$ is a holomorphic cusp form. Similar calculations exist in the case of Maass cusp forms with a small difference in the treatment of the integral transforms (see Lemma \ref{voronoi}). We start with the $n_3$-sum, and subsequent calculations will follow the same steps for the $n_1$, $n_2$, and $n_4$ sums. The following lemma provides the necessary details.

\begin{lemma}\label{L14}
		Let $\mathscr{C}_3$ denote the $n_3$-sum as given in \autoref{L13}. Then we have
		\begin{align*}
			\mathscr{C}_3 = \frac{{ X_2^{3/4}}}{\sqrt q}\, \sum_{\pm}\,\sum_{m_3 \ll X^{\epsilon}(q X_2)^2/ Y^2 X_2 } \frac{\overline{\mathcal{A}_f(m_3)}}{m^{1/4}_3}\, e\left( \frac{\overline{a}m_3}{q}\right)\, \mathscr{I}^{\pm}_3 + \mathcal{O}_{A_3}( X^{-A_3}),
		\end{align*} 
  where $A_3$ is a large positive real number and
		\begin{align}\label{zzz}
			&\mathscr{I}^{\pm}_3 =  \int_0^\infty  U_3(x_3)\,W\left(\frac{ X_2x_3-n_1}{ Y}\right)\,  e\left(-\frac{ X_2x_3u}{qQ}\pm  \frac{2\sqrt{ X_2m_3 x_3}}{q} \right)dx_3.
		\end{align}
Here, $U_3$ denotes a new weight function depending upon $V_3$.
\end{lemma}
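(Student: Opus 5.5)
We apply the Voronoi summation formula of Lemma \ref{voronoi} to the $n_3$-sum in \autoref{L13}, with additive character $e(-an_3/q)$, i.e. with $-a$ in place of $a$; the dual character then becomes $e(\pm\overline{a}m_3/q)$. The test function is
\[
g(y)=e\left(-\frac{uy}{qQ}\right)V_3\left(\frac{y}{X_2}\right)W\left(\frac{y-n_1}{Y}\right).
\]
Voronoi gives
\[
\mathscr{C}_3=\frac{1}{q}\sum_{\pm}\sum_{m_3}\overline{\mathcal{A}_f(m_3)}\,e\left(\pm\frac{\overline{a}m_3}{q}\right)G^{\pm}\left(\frac{m_3}{q^2}\right),
\]
where $G^{\pm}$ is the Hankel-type transform built from $J_{k-1}$ in the holomorphic case (and from $J_{\pm 2i\nu}$, $K_{2i\nu}$ in the Maass case). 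We substitute $y=X_2x_3$, so that $dy=X_2\,dx_3$.

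Next we insert the standard asymptotic expansion of the Bessel function for large argument:
\[
J_{k-1}(4\pi z)=\frac{1}{\sqrt{z}}\sum_{\pm}e(\pm 2z)\,W^{\pm}(z),\qquad z^jW^{\pm(j)}(z)\ll_j 1.
\]
Here $z=\sqrt{X_2x_3m_3}/q$, which is large except in a range contributing negligibly, or handled trivially. This produces the prefactor $z^{-1/2}=q^{1/2}(X_2x_3m_3)^{-1/4}$. Collecting factors gives
\[
\frac{X_2}{q}\cdot\frac{q^{1/2}}{X_2^{1/4}m_3^{1/4}}=\frac{X_2^{3/4}}{\sqrt q\,m_3^{1/4}},
\]
which is exactly the normalization in Lemma \ref{L14}. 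The oscillatory factor becomes $e\bigl(\pm 2\sqrt{X_2m_3x_3}/q\bigr)$. The remaining non-oscillatory pieces, namely $V_3(x_3)$, $x_3^{-1/4}$ and $W^{\pm}$, are absorbed into a new weight function $U_3(x_3)$, compactly supported away from $0$. Sign conventions for $\pm$ are relabelled to match \eqref{zzz}. The $K$-Bessel term in the Maass case decays exponentially and is negligible, while the $J_{\pm2i\nu}$ terms have the same asymptotics. In this way we obtain the integral $\mathscr{I}_3^{\pm}$ as stated.

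The main step is truncating the dual $m_3$-sum, and we do it by repeated integration by parts in $x_3$ in $\mathscr{I}_3^{\pm}$. The $x_3$-derivative of the phase $\pm 2\sqrt{X_2m_3x_3}/q$ has size $\sqrt{X_2m_3}/q$. The weight $U_3(x_3)W((X_2x_3-n_1)/Y)$, however, varies on scale $Y/X_2$, so each $j$-th derivative is $\ll (X_2/Y)^j$. We also need to control the phase $X_2x_3u/(qQ)$. Since $|u|\ll X^{\epsilon}$ and $Q\asymp\sqrt X$, its derivative is $\ll X^{1+\epsilon}/(q\sqrt X)$, which is dominated by $X_2/Y$ precisely because $Y\ll X^{1/2}q$; when this fails, we treat it via the $\psi$-properties listed after \eqref{Delta}.

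If $\sqrt{X_2m_3}/q\gg X^{\epsilon}X_2/Y$, each integration by parts gains a factor $X^{-\epsilon}$, so the integral is $O(X^{-A_3})$. Hence only
\[
m_3\ll X^{\epsilon}\frac{q^2X_2^2}{Y^2X_2}=X^{\epsilon}\frac{(qX_2)^2}{Y^2X_2}
\]
survive. The tail in $m_3$ is summed using the Rankin--Selberg bound \eqref{bound for gl2 coeff}, giving the error term $O_{A_3}(X^{-A_3})$.
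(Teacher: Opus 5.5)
Your route is the same as the paper's: Voronoi with $-a$ in place of $a$, Bessel asymptotics, then integration by parts in $x_3$. Your normalization $\frac{X_2}{q}\cdot\frac{\sqrt q}{(X_2m_3)^{1/4}}=\frac{X_2^{3/4}}{\sqrt q\,m_3^{1/4}}$ and the oscillatory factor are correct.

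\textbf{The main gap is in the truncation, exactly where you flagged it.} The linear phase $-X_2x_3u/(qQ)$ has $x_3$-derivative as large as $X^{1/2+\epsilon}/q$. This is dominated by the weight scale $X_2/Y$ only when $q\gg X^{\epsilon}Y/X^{1/2}$, and since the relevant range is $Y>X^{1/2}$, smaller moduli genuinely occur.

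The $\psi$-properties cannot handle these moduli. They only say that $\psi(q,u)$ is a harmless smooth weight in $u$ (essentially $1$ for $|u|\ll Q^{-\epsilon}$). They do not force $|u|$ to be small, and in any case the lemma is an identity for each fixed $u$.

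In fact the stated range is false for such $q$. The full phase $-X_2x_3u/(qQ)\pm 2\sqrt{X_2m_3x_3}/q$ is stationary where $\sqrt{m_3/x_3}=u\sqrt{X_2}/Q$, i.e.\ at $m_3\asymp u^2$. Take for example $q=1$, $n_1\asymp X$, $u=Q/\sqrt{n_1}\asymp 1$ and $X^{1/2+\epsilon}\leqslant Y\leqslant X^{3/4}$. Then the $m_3=1$ term has its stationary point at $X_2x_3=n_1$, in the middle of the support of $W$, and contributes about $YX^{-1/4}$ to $\mathscr{C}_3$. The lemma's $m_3$-range, by contrast, is empty.

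What integration by parts actually yields is obtained by keeping the linear term in the phase rather than in the weight. For $m_3\gg X^{3\epsilon}$ there is then no stationary point, $|\phi'|\asymp\sqrt{X_2m_3}/q$ and $\phi^{(j)}\ll\sqrt{X_2m_3}/q$. This gives the truncation $m_3\ll X^{\epsilon}\bigl(1+q^2X_2/Y^2\bigr)$, which matches the lemma only for $q\gg X^{\epsilon}Y/X^{1/2}$. The paper's one-line proof ignores the $u$-phase altogether, so this defect comes from the source. Still, your argument does not prove the statement as written.

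\textbf{A smaller point concerns the support of $U_3$.} You take $U_3$ to be supported away from $0$. That is what justifies the Bessel asymptotics and lets you absorb $x_3^{-1/4}$ into a weight varying on scale $Y/X_2$. With the paper's weights ($V_1$, $V_3$, $W$ are all $\equiv 1$ on $[-1,1]$), $n_1$ and hence $n_3$ run down to $1$. For $n_1\leqslant Y$ your $g$ does not even vanish at $0$, as Lemma \ref{voronoi} requires. You would first need a dyadic partition of unity in $n_3$. On a piece $n_3\sim N_3$ with $N_3<Y$ the weight varies on scale $N_3$, giving dual length $X^{\epsilon}q^2/N_3$, which exceeds the stated range once $N_3<Y^2/X$.
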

\begin{proof}
On applying the $\mathrm{GL}(2)$ Voronoi summation formula from Lemma \ref{voronoi} to the $n_3$-sum, we get
		\begin{align}\label{A5}
			\mathscr{C}_3 = \,\frac{X_2}{q} \sum_{\pm} \sum_{m_3 =1}^\infty \,\overline{\mathcal{A}_f(m_3)}\, e\left(\pm \frac{\overline{a}m_3}{q}\right) \,\mathcal{V}^{\pm}_3 \left( \frac{m_3X_2}{q^2}\right),
		\end{align}  
where $\mathcal{V}^{-}_3 = 0$ and
  \begin{align*}
			\mathcal{V}^{+}_3 \left( \frac{m_3X_2}{q^2}\right)=\, 2\pi i^{k}\, \int_0^\infty V_3\left(x_3\right) e\left(-\frac{X_2 u x_3 }{qQ}\right)\,W\left(\frac{X_2n_3-n_1}{ Y}\right)\,J_{k-1 } \left(\frac{4\pi \sqrt{X_2m_3 x_3}}{q}\right) dx_3.
		\end{align*}
  Extracting the oscillations of the Bessel function, i.e., writing
$$ J_{k-1} (4 \pi x) = \frac{1}{\sqrt{4\pi x}}\left(e(2x)\,U_{k-1} (4\pi x)  + e(-2x)\,\overline{U_{k-1}} (4\pi x)\right),$$ where $U_{k-1}$ is a smooth function with  $$x^{j} U_{k-1}^{(j)}(x) \ll_{j} 1, \ \ j \geqslant 0, \ x\gg 1.$$ 
 We obtain
 \begin{align*}
     \mathcal{V}^{+}_3 \left( \frac{m_3X_2}{q^2}\right) = \,\frac{\sqrt{q}}{(X_2 m_3)^{1/4}}\,\sum_{\pm}\,\mathscr{I}^{\pm}_3,  
 \end{align*}
 where $\mathscr{I}^{\pm}_3$ is defined as in \autoref{zzz}. The dual length $m_3 \ll X^{\epsilon}(q X_2)^2/ Y^2 X_2$ is obtained by applying  integration by parts to $\mathscr{I}^{\pm}_3$. Plugging the value of $\mathcal{V}^{+}_3 \left( \frac{m_3X_2}{q^2}\right) $ in \autoref{A5}, we get the lemma.

\end{proof}

Next, we will apply the $\mathrm{GL}(2)$ Voronoi summation formula on the sum over $n_1$. We have the following lemma.

\begin{lemma}\label{L15}
		Let $\mathscr{C}_1$ denote the $n_1$-sum as given in \autoref{L13}. Then we have
		\begin{align*}
			\mathscr{C}_1 = \frac{{ X_1^{3/4}}}{\sqrt q}\, \sum_{\pm}\,\sum_{m_1 \ll X^{\epsilon} (q X_1)^2/ Y^2 X_1 } \frac{\mathcal{A}_f(m_1)}{m^{1/4}_1}\, e\left(-\frac{\overline{a}m_1}{q}\right)\, \mathscr{I}^{\pm}_1 + \mathcal{O}_{A_1}( X^{-A_1}),
		\end{align*} where $A_1$ is a large positive real number and
		\begin{align*}
			&\mathscr{I}^{\pm}_1 =  \int_0^\infty  U_1(x_1)\,W\left(\frac{ X_2x_3 -  X_1x_1}{ Y}\right)\,  e\left(\frac{ X_1x_1u}{qQ}\pm 2 \frac{\sqrt{ X_1x_1m_1}}{q} \right)dx_1.
		\end{align*}
  Here, $U_1$ is a new weight function depending upon $V_1$.
	\end{lemma}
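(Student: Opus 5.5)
The plan is to follow the proof of Lemma \ref{L14} line by line. The only difference is that the twist on $n_1$ is $e(an_1/q)e(un_1/(qQ))$ rather than its conjugate, and that the weight coupling $n_1$ to $n_3$ is $W((n_3-n_1)/Y)$. After Lemma \ref{L14} this weight has become $W((X_2x_3-n_1)/Y)$ in the variable $x_3$. So I regard the $n_1$-sum as
\begin{equation*}
\mathscr{C}_1=\sum_{n_1=1}^\infty \mathcal{A}_f(n_1)\, e\left(\frac{an_1}{q}\right) g_1(n_1), \qquad g_1(y)= e\left(\frac{uy}{qQ}\right)V_1\left(\frac{y}{X_1}\right)W\left(\frac{X_2x_3-y}{Y}\right),
\end{equation*}
with $x_3$ and $u$ held fixed. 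The function $g_1$ is smooth and compactly supported in $(0,\infty)$. I apply Lemma \ref{voronoi} with $g=g_1$.

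Since $f$ is holomorphic, only $G^{+}$ appears. The additive character on the dual side becomes $e(-\overline{a}m_1/q)$; this is the sign stated in the lemma, opposite to the $+\overline{a}m_3/q$ obtained for $n_3$, because the $n_3$-sum carries $e(-an_3/q)$. I then substitute $y=X_1x_1$, which gives the prefactor $X_1/q$ and an integral of $V_1(x_1)e(X_1x_1u/(qQ))W((X_2x_3-X_1x_1)/Y)$ against $J_{k-1}(4\pi\sqrt{X_1m_1x_1}/q)$. Next I use the decomposition $J_{k-1}(4\pi x)=(4\pi x)^{-1/2}\bigl(e(2x)U_{k-1}(4\pi x)+e(-2x)\overline{U_{k-1}}(4\pi x)\bigr)$. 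This produces a factor $\sqrt{q}/(X_1m_1)^{1/4}$, which turns $X_1/q$ into $X_1^{3/4}/(\sqrt q\, m_1^{1/4})$, together with the two phases $\pm 2\sqrt{X_1x_1m_1}/q$. Absorbing $U_{k-1}$ and $x_1^{-1/4}$ into $V_1$ defines $U_1$ and yields $\mathscr{I}^{\pm}_1$ exactly as stated. This decomposition is valid when the Bessel argument is $\gg 1$. In the transitional range $\sqrt{X m_1}/q\ll X^{\epsilon}$ the contribution is small or can be absorbed into the same shape, as in Lemma \ref{L14}.

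The main point is truncating the dual sum, and I expect it to be the only real obstacle. The weight $W((X_2x_3-X_1x_1)/Y)$ forces $x_1$ into an interval of length $\asymp Y/X_1$, so each derivative in $x_1$ of the amplitude costs $X_1/Y$. The phase $e(X_1x_1u/(qQ))$ costs at most $X_1X^{\epsilon}/(qQ)\ll X^{\epsilon}X_1/Y$ per derivative; this uses $Y\ll Q\asymp\sqrt X$ when $q\gg1$, and otherwise it is absorbed since $q\le Q$. The derivative of the phase $2\sqrt{X_1x_1m_1}/q$ has size $\sqrt{X_1m_1}/q$. Repeated integration by parts (first derivative test) therefore gives $\mathscr{I}^{\pm}_1\ll_A X^{-A}$ unless $\sqrt{X_1m_1}/q\ll X^{\epsilon}X_1/Y$, that is, unless
\begin{equation*}
m_1\ll X^{\epsilon}\,\frac{(qX_1)^2}{Y^2X_1}.
\end{equation*}
The tail $m_1$ beyond this range is summed using $\sum_{m\le M}|\mathcal{A}_f(m)|\ll M$, from \autoref{bound for gl2 coeff}. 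Against the factor $X^{-A}$ this gives $\mathcal{O}_{A_1}(X^{-A_1})$, which completes the statement.

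I would also need the derivatives of the combined weight to be uniformly bounded, in the scaled sense, in $x_3$ and $u$. This follows from $x^jW^{(j)}\ll1$, $x^jV_1^{(j)}\ll 1$ and $|u|\ll X^{\epsilon}$.
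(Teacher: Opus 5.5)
Your route is the paper's, whose proof is just ``the same steps as Lemma~\ref{L14}'': Voronoi in $n_1$ with $W((X_2x_3-n_1)/Y)$ carried in the test function, the $J_{k-1}$ asymptotic, then integration by parts in $x_1$. Your prefactor $X_1^{3/4}/(\sqrt q\,m_1^{1/4})$ and character $e(-\overline{a}m_1/q)$ are right.

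The step that fails is absorbing $e(X_1x_1u/(qQ))$ into the amplitude. Its $x_1$-derivative is $X_1u/(qQ)$, which is $\ll X^{\epsilon}X_1/Y$ only when $qQ\gg X^{-\epsilon}Y$. Since $Q\asymp\sqrt X$, your hypothesis $Y\ll Q$ is exactly the complement of the range $X^{1/2}<Y<X^{1-\epsilon}$ where Theorem~\ref{th1} is meant to be nontrivial. The remark ``absorbed since $q\le Q$'' gives nothing for $q<Y/Q$. For such $q$ you must integrate by parts against the full phase $\phi(x_1)=X_1x_1u/(qQ)\pm2\sqrt{X_1x_1m_1}/q$, with $\phi'(x_1)=X_1u/(qQ)\pm\sqrt{X_1m_1/x_1}/q$. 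Because $X_1/Q^2\asymp1$ and $|u|\ll X^{\epsilon}$, the second term dominates once $m_1\gg X^{3\epsilon}$, and then your argument does truncate at $m_1\ll X^{\epsilon}q^2X_1/Y^2$. For $m_1\ll X^{3\epsilon}$, however, one sign has a genuine stationary point at $m_1=X_1x_1u^2/Q^2\asymp u^2$, independent of $q$. For $(u,x_3)$ placing this point in the support of $W((X_2x_3-X_1x_1)/Y)$, $\mathscr{I}^{\pm}_1$ is of polynomial size, not $O(X^{-A})$.

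So the truncation cannot be proved as stated. The correct dual length is $m_1\ll X^{\epsilon}(1+q^2X_1/Y^2)$, after renaming $\epsilon$. For $q\ll YX^{-1/2-\epsilon}$ the stated range contains no positive integer, so the lemma would assert that $\mathscr{C}_1$ is negligible there, which is false. The paper's one-line proof does not address this either, so the defect lies in the statement itself. What your method actually proves is the lemma with the extra range $m_1\ll X^{\epsilon}$ retained, and that range must then be carried into the final estimate.

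A smaller point: with the paper's weights ($V_1$ and $V_3$ equal $1$ near $0$), $g_1$ is not compactly supported in $(0,\infty)$. You should first pass to dyadic weights supported away from $0$. After that $\sqrt{X_1x_1m_1}/q\gg\sqrt X/Q\asymp1$, the Bessel asymptotic applies throughout, and no transitional range arises.
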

 \begin{proof}
The proof follows from the same steps as Lemma \ref{L14}.
 \end{proof}

\begin{lemma}\label{L16}
		Let $\mathscr{C}_2$ denote the $n_2$-sum as given in \autoref{L13}. Then we have
		\begin{align*}
			\mathscr{C}_2 = \frac{{ X_1^{3/4}}}{\sqrt q}\, \sum_{\pm}\,\sum_{m_2 \ll X^{\epsilon}Q^2/ X_1 } \frac{\overline{\mathcal{A}_f(m_2)}}{m^{1/4}_2}\, e\left(- \frac{\overline{a}m_2}{q}\right)\, \mathscr{I}^{\pm}_2 + \mathcal{O}_{A_2}( X^{-A_2}),
		\end{align*} where $A_1$ is a large positive real number and
		\begin{align*}
			&\mathscr{I}^{\pm}_2 =  \int_0^\infty  U_2(x_2)\,  e\left(-\frac{ X_1x_2u}{qQ}\pm \frac{2\sqrt{ X_1x_2m_2}}{q} \right)dx_2.
		\end{align*}
  Here, $U_2$ is a new weight function depending upon $V_2$.
	\end{lemma}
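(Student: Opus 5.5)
The plan is to mirror the proof of Lemma \ref{L14}. The one structural difference is that the $n_2$-sum in \autoref{L13} carries no weight $W$; its only smooth weight is $V_2(n_2/X_1)$. This is why the dual length comes out as $Q^2/X_1$ rather than $(qX_1)^2/(Y^2X_1)$.

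\emph{Step 1: Voronoi.} Start from
\[
\mathscr{C}_2=\sum_{n_2}\overline{\mathcal{A}_f(n_2)}\,e\!\left(\tfrac{-an_2}{q}\right)g(n_2),\qquad g(y)=e\!\left(\tfrac{-uy}{qQ}\right)V_2\!\left(\tfrac{y}{X_1}\right),
\]
and apply Lemma \ref{voronoi}, using that $\overline{\mathcal{A}_f(n)}$ are again the coefficients of a cusp form of the same type (the dual form, which equals $f$ up to conjugation for $\mathrm{SL}(2,\mathbb{Z})$). Rescaling $y=X_1x_2$ gives
\[
\mathscr{C}_2=\frac{X_1}{q}\sum_{m_2}\overline{\mathcal{A}_f(m_2)}\,e\!\left(\tfrac{-\overline{a}m_2}{q}\right)\mathcal{V}_2\!\left(\tfrac{m_2X_1}{q^2}\right),
\]
where $\mathcal{V}_2$ is the Bessel transform of $V_2(x_2)e(-X_1ux_2/(qQ))$. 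Note that the additive character $e(-an_2/q)$ becomes $e(\overline{a}m_2/q)$ with the sign conventions of \autoref{varequation}; the exact sign should be tracked, but it is immaterial for the later analysis.

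\emph{Step 2: the Bessel asymptotic.} Insert the expansion of $J_{k-1}$ exactly as in Lemma \ref{L14}. This yields
\[
\mathcal{V}_2=\frac{\sqrt q}{(X_1m_2)^{1/4}}\sum_\pm \mathscr{I}_2^\pm,
\]
with the amplitudes $U_{k-1}$ absorbed into a new weight $U_2$, and hence the prefactor $X_1^{3/4}q^{-1/2}m_2^{-1/4}$. The range $m_2X_1/q^2\ll 1$, where the asymptotic is not available, is handled by the same trivial bound or by folding it into $U_2$.

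\emph{Step 3: truncation (the main point).} The phase of $\mathscr{I}_2^\pm$ is
\[
\phi(x_2)=-\frac{X_1x_2u}{qQ}\pm\frac{2\sqrt{X_1x_2m_2}}{q}.
\]
Its derivative is
\[
\phi'(x_2)=-\frac{X_1u}{qQ}\pm\frac{\sqrt{X_1m_2}}{q\sqrt{x_2}}.
\]
Since $|u|\ll X^\epsilon$, the first term is $\ll X^\epsilon X_1/(qQ)$. Suppose
\[
\sqrt{X_1m_2}/q\gg X^{\epsilon}X_1/(qQ),\quad\text{i.e.}\quad m_2\gg X^{2\epsilon}X_1/Q^2.
\]
With $Q\asymp\sqrt X$ one has $X_1/Q^2\asymp 1\asymp Q^2/X_1$, so this is the stated threshold $m_2\gg X^\epsilon Q^2/X_1$. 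In that range $|\phi'|\gg\sqrt{X_1m_2}/q\gg X^{\epsilon}$. Meanwhile every derivative of $U_2$ is $O(1)$, and $\phi^{(j)}\ll\sqrt{X_1m_2}/q$ for $j\ge2$. Repeated integration by parts therefore gives $\mathscr{I}_2^\pm\ll_A X^{-A}$. Summing the tail over $m_2$ with the Rankin--Selberg bound \autoref{bound for gl2 coeff} then produces the error $O(X^{-A_2})$.

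The only delicate point is ensuring that the $u$-term cannot cancel the Bessel term in $\phi'$. This is exactly what the threshold $X^\epsilon Q^2/X_1$, with its $X^\epsilon$ room, guarantees.
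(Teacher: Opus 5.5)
Your proposal is correct and follows the paper's route: the paper proves this lemma by the same steps as Lemma \ref{L14}, namely $\mathrm{GL}(2)$ Voronoi summation, extraction of the $J_{k-1}$ oscillation to produce the factor $X_1^{3/4}q^{-1/2}m_2^{-1/4}$, and integration by parts in $x_2$ to truncate the dual sum (which with $Q^2\asymp X_1$ amounts to $m_2\ll X^{\epsilon}$). Your sign remark is also right: the dual character should be $e(\overline{a}m_2/q)$, which is the sign the paper itself uses in the character sum \autoref{char sum}, so the $e(-\overline{a}m_2/q)$ in the statement is a typo that does not affect the final gcd-on-average bound.
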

  \begin{proof}
The proof follows from the same steps as Lemma \ref{L14}.
 \end{proof}

 \begin{lemma}\label{L17}
		Let $\mathscr{C}_4$ denote the $n_4$-sum as given in \autoref{L13}. Then we have
		\begin{align*}
			\mathscr{C}_4 = \frac{{ X_3^{3/4}}}{\sqrt q}\, \sum_{\pm}\,\sum_{m_4 \ll X^{\epsilon}Q^2/ X_3 } \frac{\mathcal{A}_f(m_4)}{m^{1/4}_4}\, e\left(- \frac{\overline{a}m_4}{q}\right)\, \mathscr{I}^{\pm}_4 + \mathcal{O}_{A_4}( X^{-A_4}),
		\end{align*} where $A_4$ is large positive real number and
		\begin{align*}
			&\mathscr{I}^{\pm}_4 =  \int_0^\infty  U_4(x_4)\,  e\left(\frac{ X_3x_4u}{qQ}\pm \frac{2\sqrt{ X_3x_4m_4}}{q} \right)dx_4.
		\end{align*}
   Here, $U_4$ is a new weight function depending upon $V_4$.
	\end{lemma}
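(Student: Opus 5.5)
The plan is to follow Lemma \ref{L14} step for step, since the $n_4$-sum in \autoref{L13} has the same shape as the $n_2$-sum of Lemma \ref{L16}. Write
\[
\mathscr{C}_4=\sum_{n_4\ge 1}\mathcal{A}_f(n_4)\,e\!\left(\frac{an_4}{q}\right)g(n_4), \qquad g(y):=e\!\left(\frac{uy}{qQ}\right)V_4\!\left(\frac{y}{X_3}\right).
\]
The weight $g$ is smooth and supported on $y\asymp X_3$. Strictly speaking, $V_4$ is supported on $[-2,2]$ rather than in $(0,\infty)$, but $\mathcal{A}_f(n_4)=0$ for $n_4\le 0$. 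So we may assume $V_4$ vanishes near $0$; at worst we use a smooth dyadic partition, which only costs $X^{\epsilon}$.

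We apply the Voronoi formula of Lemma \ref{voronoi} with $(a,q)=1$. In the holomorphic case only $G^{+}$ survives. Substituting $x=X_3x_4$ gives
\[
\mathscr{C}_4=\frac{X_3}{q}\sum_{m_4\ge1}\mathcal{A}_f(m_4)\,e\!\left(-\frac{\overline{a}m_4}{q}\right)\,2\pi i^k\int_0^\infty V_4(x_4)\,e\!\left(\frac{X_3ux_4}{qQ}\right)J_{k-1}\!\left(\frac{4\pi\sqrt{X_3m_4x_4}}{q}\right)dx_4 .
\]
Next we insert the decomposition $J_{k-1}(4\pi x)=(4\pi x)^{-1/2}\big(e(2x)U_{k-1}(4\pi x)+e(-2x)\overline{U_{k-1}}(4\pi x)\big)$ at $x=\sqrt{X_3m_4x_4}/q$. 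The factor $(4\pi x)^{-1/2}$ equals $\sqrt q\,(X_3m_4x_4)^{-1/4}$ up to a constant. We absorb $x_4^{-1/4}$, $V_4$, $U_{k-1}$ and the constants into a new weight $U_4(x_4)$. This weight depends mildly on $m_4$ and $q$, but its derivatives are bounded because $x^jU_{k-1}^{(j)}(x)\ll 1$. The prefactor then becomes $X_3\cdot q^{-1}\cdot\sqrt q\,(X_3m_4)^{-1/4}=X_3^{3/4}q^{-1/2}m_4^{-1/4}$, and the phase becomes $\frac{X_3x_4u}{qQ}\pm\frac{2\sqrt{X_3x_4m_4}}{q}$, which gives $\mathscr{I}_4^{\pm}$. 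In the Maass case the $J_{\pm2i\nu}$ terms are treated identically. The $K_{2i\nu}$ term decays exponentially once its argument is $\gg X^{\epsilon}$ and is harmless below that, so it fits the same shape.

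The only real work is truncating the dual sum, and I expect this to be the main obstacle. The phase has derivative
\[
\phi'(x_4)=\frac{X_3u}{qQ}\pm\frac{\sqrt{X_3m_4}}{q\sqrt{x_4}} .
\]
Since $|u|\ll X^{\epsilon}$, the first term is $\ll X^{\epsilon}X_3/(qQ)$, while the second has size $\sqrt{X_3m_4}/q$. When $m_4\gg X^{\epsilon}Q^{-2}X_3$, i.e.\ $\sqrt{X_3m_4}/q\gg X^{\epsilon}X_3/(qQ)$, the second term dominates, so $|\phi'|\gg\sqrt{X_3m_4}/q$. The higher derivatives of $\phi$ and the weight are controlled by this same quantity. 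Repeated integration by parts then gives $\mathscr{I}_4^{\pm}\ll_{A}(\sqrt{X_3m_4}/q)^{-A}$.

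This decay is useful only when $\sqrt{X_3m_4}/q\gg X^{\epsilon}$. Because $Q\asymp\sqrt X\asymp\sqrt{X_3}$ and $q\le Q$, we have $X_3/Q^2\asymp 1$. The threshold is therefore at least $m_4\gg X^{\epsilon}$, and there $\sqrt{X_3m_4}/q\ge\sqrt{X_3}/Q\cdot X^{\epsilon/2}\gg X^{\epsilon/2}$. Hence the tail $m_4\gg X^{\epsilon}Q^2/X_3$ contributes $\mathcal{O}_{A_4}(X^{-A_4})$ after summing trivially over $m_4$, using $|\mathcal{A}_f(m_4)|\ll m_4^{1/2}$, say.

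One point needs care: the lemma states the cutoff as $X^{\epsilon}Q^2/X_3$, whereas the derivation above gives $X^{\epsilon}X_3/Q^2$. These coincide up to constants since $Q^2\asymp X_3$, so both amount to $m_4\ll X^{\epsilon}$ and the stated form is justified. The remaining terms give exactly the claimed expression.
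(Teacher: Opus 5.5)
Your route is the paper's route. The paper proves this lemma by repeating Lemma~\ref{L14}: Voronoi summation, splitting $J_{k-1}$ into $e(\pm 2x)$ times a non-oscillating weight, and integration by parts to cut the dual sum. Your bookkeeping is correct on all counts: the prefactor $X_3^{3/4}q^{-1/2}m_4^{-1/4}$, the twist $e(-\overline{a}m_4/q)$, and the phase. So is the cutoff: your $X^{\epsilon}X_3/Q^2$ and the stated $X^{\epsilon}Q^2/X_3$ both amount to $m_4\ll X^{\epsilon}$, because $Q^2\asymp X_3$ and $q\le Q$.

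The one step that fails is the support remark in your first paragraph. That $\mathcal{A}_f(n_4)=0$ for $n_4\le 0$ only removes the negative half of $\operatorname{supp}V_4$. The terms $1\le n_4\le X_3$ still carry weight $V_4(n_4/X_3)=1$, so you may not assume $V_4$ vanishes near $0$.

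A smooth dyadic partition does not fix this at a cost of $X^{\epsilon}$ either. A block $n_4\sim N$ with $N$ much smaller than $X_3$ has prefactor $N^{3/4}q^{-1/2}$ and dual length of order $X^{\epsilon}q^2/N$ rather than $X^{\epsilon}$, so it is not of the stated form. In fact, for a weight $g$ with $g(0)\neq 0$ one has $\int_0^\infty g(x)J_{k-1}(4\pi\sqrt{m_4x}/q)\,dx\sim (k-1)g(0)q^2/(8\pi^2 m_4)$ as $m_4\to\infty$. Hence integration by parts cannot cut the dual sum at $m_4\ll X^{\epsilon}$ with an $O(X^{-A_4})$ error.

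The lemma therefore has to be read with $V_4$ supported in $[\eta,2]$ for some fixed $\eta>0$. The paper tacitly makes the same assumption; compare the Remark after Lemma~\ref{voronoi}, where $g$ is supported on $[Y,2Y]$. Under that hypothesis your proof is complete. Removing the assumption would mean treating the small dyadic blocks separately in the final estimate, which neither you nor the paper does.
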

  \begin{proof}
The proof follows from the same steps as Lemma \ref{L14}.
 \end{proof}
 \vspace{0.2cm}

Substituting expressions from Lemma \ref{L14} - Lemma \ref{L17} into \autoref{L13}, we get the following expression for $\omega(X,Y)$. 

\begin{align}\label{omega(X,Y)}
\omega(X,Y) = &\frac{ X_1^{3/2}( X_2 X_3)^{3/4}}{Q}\sum_{q\leqslant Q}\frac{1}{q^{3}}\, \,\,  \sum_{\pm}\,\sum_{m_1 \ll X^{\epsilon}(q X_1)^2/ Y^2 X_1 } \frac{\mathcal{A}_f(m_1)}{m^{1/4}_1}\, \sum_{\pm}\,\sum_{m_2 \ll X^{\epsilon}Q^2/ X_1 } \frac{\overline{\mathcal{A}_f(m_2)}}{m^{1/4}_2} \notag\\
 &\times \sum_{\pm}\,\sum_{m_3 \ll X^{\epsilon}(q X_2)^2/ Y^2 X_2 } \frac{\overline{\mathcal{A}_f(m_3)}}{m^{1/4}_3}\,  \sum_{\pm}\,\sum_{m_4 \ll X^{\epsilon}Q^2/ X_3 } \frac{\mathcal{A}_f(m_4)}{m^{1/4}_4}\, \mathcal{C}(...)\mathscr{I}(...) + \mathcal{O}_A( X^{-A}),
\end{align}
where the character sum $\mathcal{C}(...)$ is given by
\begin{align}\label{char sum}
\mathcal{C}(...) = \sideset{}{^\star} \sum_{a \bmod q}\, e\left(- \frac{\overline{a}m_1}{q}\right)\,e\left( \frac{\overline{a}m_2}{q}\right)\,e\left( \frac{\overline{a}m_3}{q}\right)\,e\left(- \frac{\overline{a}m_4}{q}\right),  
\end{align}
which is Ramanujan sum. Thus, it becomes 
\begin{equation} \label{char sum1}
  \mathcal{C}(...) = \sum_{\substack{d \mid q \\ m_1+m_4 \equiv m_2+m_3 \ \textrm{mod } d}} d \mu\left(\frac{q}{d}\right).
\end{equation}
The integral transform $\mathscr{I}(...)$ is given by
\begin{align}\label{integral transform}
\mathscr{I}(...) = &\int_{\mathbb{R}}\,w(u)\, \psi(q,u)\,\,\int_{0}^\infty U_1(x_1)\, e\left(\frac{ X_1x_1u}{qQ} \pm  \frac{2 \sqrt{ X_1m_1x_1}}{q} \right)\notag\\
&\times \,\int_{0}^\infty U_2(x_2)\, e\left(-\frac{ X_1x_2u}{qQ} \pm  \frac{2 \sqrt{ X_1m_2x_2}}{q} \right)\,\notag\\
&\times \,\int_{0}^\infty U_3(x_3)\,W\left(\frac{ X_2x_3- X_1x_1}{ Y}\right) \,\, e\left(- \frac{ X_2x_3u}{qQ} \pm  \frac{2 \sqrt{ X_2m_3x_3}}{q} \right)\notag\\
&\times\,\int_{0}^\infty U_4(x_4)\, e\left( \frac{ X_3x_4u}{qQ} \pm  \frac{2 \sqrt{ X_3m_4x_4}}{q} \right)dx_1\,dx_2\,dx_3\,dx_4\,du.
\end{align}


\subsection{Analysis of the Integral transform:}\label{section int transform} 
Consider the $u$-integral:
\begin{equation*}
    \int_{\mathbb{R}} w(u) \psi(q,u) e\left(\frac{( X_1x_1- X_1x_2- X_2x_3+ X_3x_4)u}{qQ}\right) du,
\end{equation*}
From the properties of $\psi(q,u)$ established in Section \ref{delta}, we have
\begin{equation*}
    \frac{\partial^j}{\partial u^j}\psi(q,u) \ll_j Q^{\varepsilon j} \ \ \text{and} \ \ w^{j}(u) \ll_j Q^{\varepsilon j}, \hspace{1cm} \text{for any} \ j\geqslant1.
\end{equation*}
Therefore, applying repeated integration by parts, the $u$ integral is negligibly small unless
\begin{equation} \label{condtn}
    |x_1-x_2-x_3+x_4| \ll \frac{qQ}{ X}Q^{\varepsilon}. 
\end{equation}
In particular, when $q \gg Q^{1-\varepsilon}$, the condition \eqref{condtn} holds trivially.

 
%

\smallskip

\noindent Let $x_1-x_2-x_3+x_4 =: t$ with $|t| \ll qQ^{1+\varepsilon}/ X$, we arrive at the following expression for $\mathscr{I}(...)$.
\begin{align} \label{Integral transform I}
  \mathscr{I}(...) &=  \int_{|t|\ll \frac{qQ^{1+\varepsilon}}{ X} } \int_0^\infty U_1(x_1) e\left( \pm  \frac{2 \sqrt{ X_1m_1x_1}}{q} \right) 
\int_{0}^\infty U_2(x_2)\, e\left( \pm  \frac{2 \sqrt{ X_1m_2x_2}}{q} \right)  \notag\\
&\times \,\int_{0}^\infty U_3(x_3) W\left(\frac{ X_2x_3- X_1x_1}{ Y}\right) U_4(t-x_1+x_2+x_3) e\left( \pm  \frac{2 \sqrt{ X_2m_3x_3}}{q} \right) \notag\\
& \hspace{3.4cm} \times  e\left( \pm  \frac{2 \sqrt{ X_3m_4(t-x_1+x_2+x_3)}}{q} \right)\,dx_1 dx_2 dx_3 dt.
\end{align}
Next, consider $x_1$ and $x_3$ integrals:
\begin{multline*}
    \int_0^\infty \int_0^\infty U_1(x_1) U_3(x_3) W\left(\frac{ X_2x_3- X_1x_1}{ Y}\right) U_4(t-x_1+x_2+x_3) \\ 
    \times e\left( \pm  \frac{2 \sqrt{ X_1m_1x_1}}{q} \right) e\left( \pm  \frac{2 \sqrt{ X_2m_3x_3}}{q} \right) e\left( \pm  \frac{2 \sqrt{ X_3m_4(t-x_1+x_2+x_3)}}{q} \right) dx_1 dx_3.
\end{multline*}
Applying the change of variable $u= \frac{ X_2x_3- X_1x_1}{ Y}$ and $v=x_3$, the above integrals becomes
\begin{multline*}
    \frac{- Y}{ X_1} \int_0^\infty \int_0^\infty U_1\left(\frac{ X_2v- Yu}{ X_1}\right) U_3(v)W(u) \\ 
    \times U_4\left( t+x_2 + \frac{( X_1- X_2)}{ X_1}v + \frac{ Y}{ X_1}u\right) e\left( \pm\frac{2\sqrt{m_1\left( X_2v- Yu \right)}}{q}\right)e\left( \pm  \frac{2 \sqrt{ X_2m_3v}}{q} \right) \\ \times e\left( \pm  \frac{2 \sqrt{ X_3m_4\left( t+x_2 + \frac{( X_1- X_2)}{ X_1}v + \frac{ Y}{ X_1}u\right)}}{q} \right) du dv.
\end{multline*}
Substituting this back into \autoref{Integral transform I} and treating the remaining integrals trivially, we obtain the following bound
\begin{equation} \label{bound I}
    \mathscr{I}(...) \ll \frac{q Y}{Q^{1-\varepsilon}  X}.
\end{equation}
\subsection{Final estimates}
By plugging the expression for the character sum $\mathcal{C}(...)$ from \autoref{char sum} and bound for the integral transform $\mathscr{I}(...)$ from \autoref{bound I} into \autoref{omega(X,Y)}, we obtain
\begin{multline*}
   \omega(X,Y) \ll \frac{ Y X_1^{3/2}( X_2 X_3)^{3/4}}{Q^{2-\varepsilon} X} \sum_{q \leqslant Q} \frac{1}{q^2} \sum_{m_1 \ll X^{\varepsilon} (q X_1)^2/ Y^2 X_1 } \frac{|\mathcal{A}_f(m_1)|}{m^{1/4}_1} \sum_{m_2 \ll X^{\varepsilon} Q^2/ X_1 } \frac{\overline{|\mathcal{A}_f(m_2)|}}{m^{1/4}_2} \\
 \times \sum_{m_3 \ll X^{\varepsilon} (q X_2)^2/ Y^2 X_2 } \frac{\overline{|\mathcal{A}_f(m_3)|}}{m^{1/4}_3} \sum_{m_4 \ll X^{\varepsilon} Q^2/ X_3 } \frac{|\mathcal{A}_f(m_4)|}{m^{1/4}_4} \sum_{\substack{d \mid q \\ m_1+m_4 \equiv m_2 + m_3 \ \textrm{mod } d}} d \mu\left(\frac{q}{d}\right).
\end{multline*}
For a fixed $m_1,m_2$, $m_3$, the variable $m_4$ can be determined. We can bound the above expression by:
\begin{multline*}
    \ll \frac{ Y X_1^{3/2}( X_2 X_3)^{3/4}}{Q^{2-\varepsilon} X} \sum_{q \leqslant Q} \frac{1}{q^2} \sum_{m_1 \ll X^{\varepsilon} (q X_1)^2/ Y^2 X_1 } \frac{|\mathcal{A}_f(m_1)|}{m^{1/4}_1} \sum_{m_2 \ll X^{\varepsilon} Q^2/ X_1 } \frac{\overline{|\mathcal{A}_f(m_2)|}}{m^{1/4}_2} \\
 \times \sum_{m_3 \ll X^{\varepsilon} (q X_2)^2/ Y^2 X_2 } \frac{\overline{|\mathcal{A}_f(m_3)|}}{m^{1/4}_3} \sum_{m_4 \ll X^{\varepsilon} Q^2/ X_3 } \frac{|\mathcal{A}_f(m_4)|}{m^{1/4}_4} (q, m_1+m_4 - m_2 -m_3),
\end{multline*}
split the sum over $q$ into dyadic intervals of size $\mathcal{Q}$ such that $\mathcal{Q} \ll Q$. The above expression becomes bounded by
\begin{multline*}
    \ll \frac{ Y X_1^{3/2}( X_2 X_3)^{3/4}}{Q^{2-\varepsilon} X} \sup_{\mathcal{Q} \ll Q} \sum_{m_1 \ll X^{\varepsilon} (\mathcal{Q} X_1)^2/ Y^2 X_1 } \frac{|\mathcal{A}_f(m_1)|}{m^{1/4}_1} \sum_{m_2 \ll X^{\varepsilon} Q^2/ X_1 } \frac{\overline{|\mathcal{A}_f(m_2)|}}{m^{1/4}_2} \\
    \times \sum_{m_3 \ll X^{\varepsilon} (\mathcal{Q} X_2)^2/ Y^2 X_2 } \frac{\overline{|\mathcal{A}_f(m_3)|}}{m^{1/4}_3} \sum_{m_4 \ll X^{\varepsilon} Q^2/ X_3 } \frac{|\mathcal{A}_f(m_4)|}{m^{1/4}_4} \sum_{q \sim \mathcal{Q}} \frac{1}{q^2} (q, m_1+m_4 - m_2 -m_3).
\end{multline*}
Using the gcd on average bound for the $q$ sum, we get
\begin{multline*}
    \ll \frac{ X^\varepsilon Y X_1^{3/2}( X_2 X_3)^{3/4}}{Q^{2-\varepsilon} X} \sup_{\mathcal{Q} \ll Q} \frac{1}{\mathcal{Q}} \sum_{m_1 \ll X^{\varepsilon} (\mathcal{Q} X_1)^2/ Y^2 X_1 } \frac{|\mathcal{A}_f(m_1)|}{m^{1/4}_1} \sum_{m_2 \ll X^{\varepsilon} Q^2/ X_1 } \frac{\overline{|\mathcal{A}_f(m_2)|}}{m^{1/4}_2} \\
    \times \sum_{m_3 \ll X^{\varepsilon} (\mathcal{Q} X_2)^2/ Y^2 X_2 } \frac{\overline{|\mathcal{A}_f(m_3)|}}{m^{1/4}_3} \sum_{m_4 \ll X^{\varepsilon} Q^2/ X_3 } \frac{|\mathcal{A}_f(m_4)|}{m^{1/4}_4}.
\end{multline*}
Further, applying Cauchy's inequality to each of these sums along with the bound for $\mathrm{GL}(2)$ Fourier coefficients given in \autoref{bound for gl2 coeff}, we arrive at
\begin{align*}
  \omega(X,Y)   &\ll \frac{ Y X_1^{3/2}( X_2 X_3)^{3/4}}{Q^{2-\varepsilon} X} \sup_{\mathcal{Q} \ll Q} \frac{1}{\mathcal{Q}} \left( \frac{(\mathcal{Q} X_1)^2}{ Y^2 X_1}\right)^{3/4} \left(\frac{Q^2}{ X_1}\right)^{3/4} \left( \frac{(\mathcal{Q} X_2)^2}{ Y^2 X_2}\right)^{3/4}  \left(\frac{Q^2}{ X_3}\right)^{3/4} \\
    &\ll \frac{ X^{\varepsilon} (X_1 X_2)^{3/2} Q^{1+\varepsilon}}{ X} \frac{1}{Y^2} \sup_{\mathcal{Q} \ll Q} \frac{1}{\mathcal{Q}} \mathcal{Q}^3 \ll \frac{X^2 Q^{3+\varepsilon}}{Y^2} = \frac{ X^{7/2 +\varepsilon}}{Y^2},
\end{align*}
as $Q = \sqrt{ X}$ and $X_i \asymp X$ for each $i=1,2,3$.
Substituting this bound for $\omega(X,Y)$ into \autoref{kappa to omega} and further putting it into \autoref{S( X, Y) with kappa}, we have the following:
\begin{align}
    \mathscr{S}_{ X, Y} \ll \norm{\mathsf{a}}_2 \norm{\mathsf{b}}_2 Y^{1/4} \left(  \frac{ X^{7/2 +\varepsilon}}{Y^2}\right)^{1/4} = \norm{\mathsf{a}}_2 \norm{\mathsf{b}}_2 \frac{ X^{7/8 + \varepsilon}}{ Y^{1/4}}.
\end{align}
The above bound is non-trivial provided 
\begin{equation}
     \frac{ X^{7/8 + \varepsilon}}{ Y^{1/4}} < ( XY )^{1/2+\varepsilon} \iff  X^{1/2} < Y <  X^{1-\varepsilon}.
\end{equation}
\vspace{0.3cm}
\section{Proof of Theorem \ref{th2}}

In Theorem \ref{th2}, we aim to prove the cancellations in the following sum
\begin{align*}
\mathcal{T}_{X, \mathsf{S}} =  \sum_{X \leqslant n \leqslant 2X}\,\sum_{m \in \mathsf{S}} \, \mathsf{c}(n)\, \mathcal{A}_{f}(n+m),
\end{align*}
On applying the Cauchy-Schwarz inequality to the sum over $n$, we get
 \begin{align} \label{TXS}
\mathcal{T}_{X, \mathsf{S}} &\leqslant \norm{\mathsf{c}}_2 \lr{\sum_{n \in \mathbb{Z}} W_1\lr{\frac{n}{X}}\left|\sum_{m\in S} \mathcal{A}_f(n+m)\right|^2}^{1/2} = \norm{\mathsf{c}}_2  \,  \mathcal{U}^{1/2}_{X, \mathsf{S}},  
 \end{align}
where $W_1$ is positive suitable bump function, supported on the interval $[-2,2]$ with $W_1(x) = 1$ for $x \in [-1,1]$, also satisfies $x^j W_1^{(j)}(x) \,\ll_j\,1$, for $j\geqslant 0$ and
 \begin{align*}
 \mathcal{U}_{X, \mathsf{S}}  =\,\sum_{n \in \mathbb{Z}}\,  \sum_{m_1 \in \mathsf{S}}\,\sum_{m_2 \in \mathsf{S}}\,\, \mathcal{A}_{f}( n+m_1)\,\overline{\mathcal{A}_{f}(n+m_2)} \,W_1\left(\frac{n}{X}\right).
\end{align*}
Changing the variables $n + m_2 \longrightarrow n$, we can write
\begin{align*}
 \mathcal{U}_{X, \mathsf{S}}  =  \sum_{n \in \mathbb{Z}}\,\overline{\mathcal{A}_{f}( n)}\,\,\sum_{m_1 \in \mathsf{S}}\,\sum_{m_2 \in \mathsf{S}}\,\, {\mathcal{A}_{f}(n+m_1-m_2)}\,W_1\left(\frac{n-m_2}{X}\right).    
\end{align*} 
Applying the Cauchy-Schwarz inequality to the sum over $n$, we get that $  \mathcal{U}_{X, \mathsf{S}}$ is dominated by
\begin{align*}
 \left(\sum_{n \ll X}\,\left|\overline{\mathcal{A}_{f}( n)}\right|^2\right)^{1/2}\,   \left(\sum_{n \ll X}\,\left|\,\sum_{m_1 \in \mathsf{S}}\,\sum_{m_2 \in \mathsf{S}}\,{\mathcal{A}_{f}( n+m_1-m_2)}\,W_1\left(\frac{n-m_2}{X}\right)\right|^2\right)^{1/2}.
\end{align*}
By using the Ramanujan bound on average from \autoref{bound for gl2 coeff}, we get
\begin{align*}
 \mathcal{U}_{X, \mathsf{S}} \ll&\, X^{1/2}\,\left(\sum_{n \ll X}\,\sum_{m_1 \in \mathsf{S}}\,\sum_{m_2 \in \mathsf{S}}\, {\mathcal{A}_{f}(n+m_1-m_2)}\,W_1\left(\frac{n-m_2}{X}\right) \right.\\
& \hspace{3.5cm}\left.\, \times \sum_{m_3 \in \mathsf{S}}\,\sum_{m_4 \in \mathsf{S}}\, \overline{\mathcal{A}_{f}(n+m_3-m_4)}\,W_2\left(\frac{n-m_4}{X}\right)\right)^{1/2},      
\end{align*}
Since $W_1$ is a real-valued function, $W_2 = W_1$. Here, we use different notations for the copy of $W_1$ to be more precise in our steps. Changing the variable $n+m_3-m_4 \longrightarrow n$, we get
\begin{multline*}
 \mathcal{U}_{X, \mathsf{S}} \ll\, X^{1/2}\,  \Biggl( \ \sum_{n \ll X}\,\overline{\mathcal{A}_{f}(n)}\,\,\mathop{\sum_{m_i \in \mathsf{S}}}_{1 \leqslant i \leqslant 2^2}\,{\mathcal{A}_{f}(n+m_1-m_2-m_3+m_4)}\\
\times \,W_1\left(\frac{n-m_2-m_3+m_4}{X}\right)\,W_2\left(\frac{n-m_3}{X}\right) \Biggr)^{1/2}.   
\end{multline*}
We again apply the Cauchy-Schwarz inequality to the $n$-sum then use the Ramanujan bound on average for the $\mathrm{GL}(2)$ Fourier coefficients $\mathcal{A}_{f}(n)$, and again after a change of variables, we get that $ \mathcal{U}_{X, \mathsf{S}}$ is dominated by the following expression 
\begin{align*}
X^{1/2 + 1/2^2}\, &\Biggl(\ \sum_{n \ll X}\,\overline{\mathcal{A}_{f}( n)}\,\,\mathop{\sum_{m_i \in \mathsf{S}}}_{1 \leqslant i \leqslant 2^3}\,{\mathcal{A}_{f}(n+m_1-m_2-m_3+m_4-m_5+m_6+m_7-m_8)}  \\
&\hspace{3.5cm}  \times\,W_1\left(\frac{n-m_2-m_3+m_4-m_5 +m_6+m_7-m_8}{X}\right) \, \\
 &\hspace{3.5cm} \,\times\,W_2\left(\frac{n-m_3-m_5+m_6+m_7-m_8}{X}\right)\\
 &\hspace{3.5cm}\,\times \,W_3\left(\frac{n-m_5}{X}\right)\,W_4\left(\frac{n-m_5+m_6-m_8}{X}\right) \Biggr)^{1/2^2}.    
\end{align*}

\noindent In the argument of $\mathcal{A}_{f}(n)$ in the above expression, we can see that half of $m_i$'s appear with negative signs and the other half with positive signs. For simplicity, we make the change of variables and note that $ \mathcal{U}_{X, \mathsf{S}}$ is dominated by the following expression
\begin{multline*}
X^{1/2 + 1/2^2} \left(\sum_{n \ll X}\,\overline{\mathcal{A}_{f}( n)}\,\,\mathop{\sum_{m_i \in \mathsf{S}}}_{1 \leqslant i \leqslant 2^3} \,\mathcal{A}_{f}(n+ \sum_{j=0}^3 (m_{2j+1} - m_{2j+2})) \right.\\ \hspace{3cm} \times \left. \prod_{1 \leqslant j \leqslant 2^2}\,W_j\left(\frac{n + \mathop{\sum}_{1 \leqslant s \leqslant 2^3}\,\epsilon_{js} m_s}{X}\right)\,\right)^{1/2^2},    
\end{multline*}
where $\epsilon_{js} \in \left\{0, 1, -1\right\}$. Let $\mathsf{A} = \mathsf{S}-\mathsf{S}: = \{s_1 - s_2 : s_1 \in \mathsf{S}, s_2 \in \mathsf{S}\} \subset [-X, X]$. We repeat the same treatment as above to the $n$-sum, $\ell$-many times such that $\ell$ is least with $ |2^{\ell} \mathsf{A}| \gg X^{1-\delta_1}$ for some $\delta_1 \geqslant 1/2$ and $|2^{\ell} \mathsf{A}| \ll X^{2^{\ell+1}/k} < X^{1-\epsilon}$ for any epsilon $0<\epsilon  < 1/2$. We finally get that $ \mathcal{U}_{X, \mathsf{S}}$ is dominated by
\begin{align*}
 &\,X^{1/2 + 1/2^2 + ... + 1/2^{\ell}}\\
 &\times\left(\sum_{{n \ll X}}\overline{\mathcal{A}_{f}( n)}\hspace{-0.2cm}\mathop{\sum_{m_i \in \mathsf{S}}}_{1 \leqslant i \leqslant 2^{\ell+1}}{\mathcal{A}_{f}(n+ \sum_{j=0}^{2^\ell-1} (m_{2j+1} - m_{2j+2})} ) \prod_{1 \leqslant j \leqslant 2^{\ell}}W_j\left(\frac{n + \mathop{\sum}_{1 \leqslant s \leqslant 2^{\ell +1}}\,\epsilon_{js} m_s}{X}\right)   \right)^{1/2^{\ell}}. 
 \end{align*}
 Here, $W_j = W_1$ for all $1 \leqslant j \leqslant 2^{\ell}$. Using the inverse Fourier transform for the function $W_j$ for all $1 \leqslant j \leqslant 2^{\ell}$, we get that $ \mathcal{U}_{X, \mathsf{S}}$ is dominated by
 \begin{align*}
X^{1 - 1/2^{\ell}}\,  &\Biggl( \ \ \int_{|y_1| \ll X^{\varepsilon }} \int_{|y_2| \ll X^{\varepsilon } }...\int_{|y_{2^{\ell}}| \ll X^{\varepsilon }}\,\widehat{W_1}(y_1)\,\widehat{W_2}(y_1)...\widehat{W_{2^{\ell}}}(y_{2^{\ell} } )\,\,  \\
 &  \times\,\,\,
  \sum_{n \ll X}\,\overline{\mathcal{A}_{f}(n) }\,e\left(\frac{n (y_1 + y_2 + ... + y_{2^{\ell}} ) }{X}\right)\,\mathop{\sum_{m_i \in \mathsf{S}} }_{1 \leqslant i \leqslant 2^{\ell+1} } \,\mathcal{A}_{f}(n+\sum_{j=0}^{2^\ell-1} (m_{2j+1} - m_{2j+2})) \\ 
 &\,\times\,\,\,\prod_{1 \leqslant j \leqslant 2^{\ell}}\,e\left(\frac{\left(\mathop{\sum}_{1 \leqslant s \leqslant 2^{\ell +1} }\,\epsilon_{sj} m_s \right)y_j}{X} \right) \Biggr)^{1/2^{\ell}} ,   
\end{align*}
where $\widehat{W_j}$ denotes the Fourier transform of the function $W_j$ for all $1 \leqslant j \leqslant 2^{\ell}$. Define
 \begin{align*}
 \alpha(n): = \overline{\mathcal{A}_{f}(n)}\,e\left(\frac{n(y_1 + y_2 + ... + y_{2^{\ell}})}{X}\right),    
 \end{align*} 
 and
 \begin{align*}
 \beta_{\ell}(m) :=  \mathop{\sum_{m_1 \in \mathsf{S}}\,\sum_{m_2 \in \mathsf{S}}\,...\,\sum_{m_{2^{\ell+1}} \in \mathsf{S}}}_{m \,=\,(m_1-m_2) + (m_3 - m_4) \,+\, ...\, + \,(m_{2^{\ell+1}-1}- m_{2^{\ell+1}}) } \, \prod_{1 \leqslant j \leqslant 2^{\ell}}\,e\left(\frac{\left(\mathop{\sum}_{1 \leqslant s \leqslant 2^{\ell +1}}\,\epsilon_{sj} m_s\right)y_j}{X}\right). 
 \end{align*}
 Now, we can write
\begin{align} \label{T1XS}
 \mathcal{U}_{X, \mathsf{S}} \ll X^{1- 1/2^{\ell} +\varepsilon}\left( \ \int_{|y_1| \ll X^{\varepsilon }}\int_{|y_2| \ll X^{\varepsilon }}...\int_{|y_{2^{\ell}}| \ll X^{\varepsilon }}\widehat{W_1}(y_1)\,\widehat{W_2}(y_1)...\widehat{W_{2^{\ell}}}(y_{2^{\ell}}) \mathcal{V}_{X, \mathsf{S}}\right)^{1/2^{\ell}}, 
\end{align}
where $ \mathcal{V}_{X, \mathsf{S}} $ represents a bilinear sum given by
\begin{align} \label{T2XS}
 \mathcal{V}_{X, \mathsf{S}} = \,  \sum_{n \ll X}\,\sum_{m \leqslant |2^{\ell} \mathsf{A}|}\, \alpha(n)\,\beta_{\ell}(m)\,\mathcal{A}_{f}(n+m) . 
\end{align}
We note that the trivial bound for $ \mathcal{V}_{X, \mathsf{S}}$ gives rise to 
\begin{align*}
     \mathcal{V}_{X, \mathsf{S}} \ll   X^{1+\varepsilon} |2^{\ell} \mathsf{A}| \ll  X^{1+\varepsilon} \left|   \mathsf{A}\right|^{2^{\ell}} \ll  X^{1+\varepsilon} \left|   \mathsf{S}\right|^{2^{\ell+1}}. 
\end{align*}
Note that the Fourier transform $\widehat{W_j}$ is bounded as $W_j$ is a compactly supported smooth function for every $1 \leqslant j \leqslant 2^{\ell}$. Substituting the trivial bound for $   \mathcal{V}_{X, \mathsf{S}}$ in \autoref{T1XS} we obtain that
\begin{align*}
     \mathcal{U}_{X, \mathsf{S}} \ll \,X^{1- 1/2^{\ell} + \varepsilon }\, \left( X^{1+\varepsilon } \left|   \mathsf{S}\right|^{2^{\ell+1}}\right)^{1/2^{\ell}} \ll X^{1+\varepsilon}|\mathsf{S}|^2. 
\end{align*}
Substituting the bound for $\mathcal{U}_{X, \mathsf{S}}$ in \autoref{TXS}, we obtain that
\begin{align*}
     \mathcal{T}_{X, \mathsf{S}} \ll \norm{\mathsf{c}}_2 X^{\frac{1}{2} + \varepsilon} |\mathsf{S}|. 
\end{align*}
We observe that a trivial bound for $  \mathcal{V}_{X, \mathsf{S}}$ gives rise to a trivial bound for $ \mathcal{T}_{X, \mathsf{S}}$. We shall apply Theorem  \ref{th1} to get a non-trivial bound for  $ \mathcal{T}_{X, \mathsf{S}}$.
\vspace{0.3cm}

\noindent Since we have chosen $\ell$ (least) such that $|2^{\ell} \mathsf{A}| \gg X^{1-\delta_1}$, i.e.,
$X^{1-\delta_1} \ll |2^{\ell} \mathsf{A}| \ll X^{2^{\ell+1}/k}$, which implies that $ 2^{\ell+1} > k(1-\delta_1)$. So, we can see that the above sum $ \mathcal{V}_{X, \mathsf{S}}$  is the same as our main sum $\mathscr{S}_{X, Y}$ in Theorem \ref{th1} with $\mathsf{a}(n)=\alpha(n)$ and $\mathsf{b}(m) = \beta_\ell(m)$. By using Theorem \ref{th1} with $ \norm{\alpha}_2 \ll {X}^{1/2}$, we finally get 
\begin{align*}
  \mathcal{T}_{X, \mathsf{S}} & \ll\, X^{\varepsilon } \norm{\mathsf{c}}_2\, X^{1/2- 1/2^{\ell+1}} \,\left(\norm{\alpha}_2\, \norm{\beta_{\ell}}_2\, \frac{X^{3/8}}{\left(X^{2^{\ell+1}/k}\right)^{3/4}}\, \left(X\,X^{2^{\ell+1}/k}\right)^{1/2+\varepsilon }\right)^{1/2^{\ell +1}} \\
 & \ll\, X^{\varepsilon }  \norm{\mathsf{c}}_2\, X^{\frac{1}{2}}\,  X^{\frac{3}{ 2^{\ell +4}}  }\,\left(\, \frac{\norm{\beta_{\ell}}_2}{\left(X^{2^{\ell+1}/k}\right)^{1/4}} X^{\varepsilon }\right)^{1/2^{\ell +1}}\\
 & \ll\,  \norm{\mathsf{c}}_2 X^{\frac{1}{2}} \left(\norm{\beta_{\ell}}_2\right)^{1/2^{\ell+1}}\,  X^{ \frac{3}{2^{\ell +4}}- \frac{1}{4k} + \varepsilon }.
\end{align*}
This bound for $ \mathcal{T}_{X, \mathsf{S}}$ is non-trivial when
\begin{align*}
   \left(\norm{\beta_{\ell}}_2\right)^{1/2^{\ell+1}}\,  X^{\frac{3}{2^{\ell +4}}- \frac{1}{4k} + \varepsilon } < X^{\frac{1}{k}+ \varepsilon }.
\end{align*}
This proves Theorem \ref{th2}.
\vspace{0.4cm}

\noindent \textbf{Acknowledgement:}
  The authors express their gratitude to Prof. Ritabrata Munshi and Prof. Saurabh Kumar Singh for their invaluable suggestions and comments. H. Chanana thanks the Department of Mathematics and Statistics at IIT Kanpur for its excellent academic environment. She is also grateful for the support received from UGC, Government of India. M. Harun wants to thank the Stat-Math Unit at ISI Kolkata for providing the Visiting Scientist fellowship and a supportive research atmosphere for drafting this work. He also wants to thank the Department of Science and Mathematics at the Indian Institute of Information Technology, Guwahati, for providing good research facilities to complete the work. 
  \vspace{0.4cm}

  \noindent \textbf{Declarations:}
  \vspace{0.2cm}

\noindent \textbf{Conflict of interest statement.} On behalf of all authors, the corresponding author states
that there is no Conflict of interest.
\vspace{0.3cm}

\noindent \textbf{Data availability statement.} Data sharing does not apply to this article as no datasets
were generated or analyzed during the current study.

\end{document}